\documentclass[12pt, oneside]{article}

\usepackage[T1]{fontenc}
\usepackage[utf8]{inputenc}
\usepackage[a4paper, total={6.2in, 9.1in}, left={1.3in}, right={1.3in}, top={1.1in},bottom={1.3in}, footskip={.5in}]{geometry}
\usepackage{amstext,amsmath, amssymb, amsthm, mathrsfs, mathtools}
\usepackage{thmtools}
\usepackage{titlesec}
\usepackage{authblk}
\usepackage{enumerate}
\usepackage[dvipsnames]{xcolor}
\usepackage{comment}
\usepackage{etoolbox}
\usepackage{stmaryrd}

\usepackage{setspace}
\allowdisplaybreaks

\makeatletter
\newcommand\@authorinfo{}
\renewcommand{\author}[2][]{
  \g@addto@macro\@author{#2\and}
  \g@addto@macro\@authorinfo{
    \par\bigskip
    \noindent{\scshape #2}\par
  }
}
\newcommand{\address}[1]{
  \g@addto@macro\@authorinfo{#1\par}
}
\newcommand{\email}[1]{
  \g@addto@macro\@authorinfo{\texttt{#1}\par}
}
\AtEndDocument{
  \par\bigskip
  \@authorinfo
}
\makeatother

\makeatletter
\renewcommand{\maketitle}{
    \vspace*{1em}
    \begin{center}
        {\normalfont\bfseries\MakeUppercase{\@title}\par}
        \vskip 1em
        {\normalfont\small \@date \par}
    \end{center}
}
\makeatother

\renewenvironment{abstract}{
    \noindent
    \begin{list}{}{
        \setlength{\leftmargin}{2.5em}
        \setlength{\rightmargin}{2.5em}
        }
        \item\relax
        \textsc{Abstract. }\ignorespaces
        }{
  \end{list}
}

\titleformat{\section}
  {\normalfont\scshape\centering}
  {\thesection.}{0.5em}{}

\titleformat{\subsection}[runin]
  {\normalfont\bfseries}
  {\thesubsection.}{0.5em}{}[.]

\usepackage[backend=bibtex]{biblatex}
\renewbibmacro{in:}{}
\DeclareFieldFormat{pages}{#1}

\newtheorem{theorem}{Theorem}[section]
\newtheorem{lemma}[theorem]{Lemma}

\newtheorem{proposition}[theorem]{Proposition}
\newtheorem{corollary}[theorem]{Corollary}

\theoremstyle{definition} 

\numberwithin{equation}{section}

\DeclareMathOperator{\nR}{\mathbb{R}}

\DeclareMathOperator{\nN}{\mathbb{N}}

\DeclareMathOperator{\supp}{supp}
\DeclareMathOperator{\Id}{Id}

\DeclareMathOperator{\Rm}{Rm}
\DeclareMathOperator{\Ric}{Ric}
\DeclareMathOperator{\R}{R}

\DeclareMathOperator{\Div}{div}

\DeclareMathOperator{\B}{B}
\DeclareMathOperator{\C}{C}
\DeclareMathOperator{\W}{W}

\usepackage[colorlinks=true,
    linkcolor=black,
    citecolor=RoyalBlue,
    urlcolor=cyan, hypertexnames=false]{hyperref}
\usepackage[capitalise]{cleveref}

\title{Bach-flat metrics and global regularity of \\ conformal structures}

\date{}

\author{Rodrigo Avalos} 
\address{\noindent Institut für Mathematik, Universität Rostock, Ulmenstraße 69, Haus 3, 18057 Rostock, Germany}
\email{rodrigo.avalos@uni-rostock.de}

\author{Albachiara Cogo}
\address{\noindent Centro di Ricerca Matematica Ennio De Giorgi, Scuola Normale Superiore di Pisa, Piazza dei Cavalieri 3, 56126, Pisa, Italy}
\email{albachiara.cogo@sns.it}

\author{Andoni Royo Abrego}
\address{\noindent Institut Élie Cartan de Lorraine, Université de Lorraine, Boulevard des Aiguillettes
54506, Nancy }
\email{andoni.royo-abrego@univ-lorraine.fr}

\begin{document}

\maketitle

\begin{center}
    {\footnotesize \MakeUppercase{Rodrigo Avalos, Albachiara Cogo, and Andoni Royo Abrego}}
\end{center}

\begin{abstract}
    We study global regularity properties of conformal structures on compact manifolds with applications to conformally invariant variational problems. We find sharp conditions  ensuring that a manifold equipped with a Riemannian metric of Sobolev class is conformally diffeomorphic to a smooth one. In particular, we prove that $W^{2,q}$ Bach-flat four-manifolds with $q > 2$ are smooth up to conformal diffeomorphisms. We also solve the Yamabe problem for $W^{2,q}$ metrics with $q > 2$ on three-manifolds under a mild conformal condition.
\end{abstract}

\vspace{0.3cm}

\tableofcontents

\hfill

\pagestyle{plain}

\section[{\textbf{Introduction and main results}}]{Introduction and main results}\label{section: intro}

Arguably the most natural conformally invariant functional for a Riemannian metric in a four dimensional manifold $M$ is the \textit{Weyl energy}
\begin{equation}
    \label{eq: Weyl energy}
    \mathbf W(g) = \int_M |\W_g|^2_g \, d\mu_g \,,
\end{equation}
where $\W_g$ and $d\mu_g$ are the Weyl tensor and volume form of $g$ respectively. Critical points of \eqref{eq: Weyl energy} are metrics with vanishing Bach tensor
\begin{equation*}
    \B_{ij} = \left(\nabla^k\nabla^l + \frac{1}{2}\R^{kl}\right) \W_{ikjl} \,.
\end{equation*}
These metrics are called \textit{Bach-flat} and relatively little is known about them. Locally conformally flat, conformally Einstein and self-dual or anti-self-dual metrics are prominent examples of Bach flat metrics (we refer the reader to \cite{ViaclovskySurvey,BesseBook} for definitions and many examples). Unlike for these special cases, though, no topological obstruction for the existence of Bach flat metrics is known. Examples of Bach-flat metrics that do not belong to any of the families above have been constructed by Abbena--Garbiero--Salamon \cite{AbbenaGarbieroSalamon}.

In the influential work of Tian--Viaclovsky \cite{TianViaclovsky1,TianViaclovsky2,TianViaclovsky3}, the moduli space of smooth Bach-flat metrics was studied in analogy with previous work on the moduli space of Einstein metrics \cite{Anderson89,BandoKasueNakajima}. In particular, $\varepsilon$-regularity estimates and orbifold precompactness were established under suitable energy and non-collapsing bounds; the removable singularity theorem was completed by Streets \cite{StreetsBachflat}. Recently, a removable singularity theorem for metrics with Bach tensor bounded in $L^p$ has been proven by Laurain--Martino \cite{LaurainMartino}, with the goal of understanding bubbling along minimizing sequences rather than sequences of critical points. Malchiodi--Malizia \cite{MalchiodiMalizia,MalchiodiMalizia2} have also been able to rule out certain non self-dual/anti-self-dual bubbles along minimizing sequences by showing that the Weyl energy drops along connected sums. 

In the same direction, we are interested in understanding minimizing sequences and critical points of the Weyl energy. One of the main difficulties in the variational study of \eqref{eq: Weyl energy} is its invariance under conformal diffeomorphisms: for any positive function $u : M \to \nR$ and diffeomorphism $\Phi : M \to M$, there holds $\mathbf W(\Phi^*(u^2g)) = \mathbf W(g)$. This means that the analysis of critical points requires fixing both a diffeomorphism and a conformal gauge. Observe that the functional \eqref{eq: Weyl energy} is bounded in the space of $W^{2,q}(M)$ metrics for any $q > 2$, making it a natural candidate of admissible metrics for global analysis. In this paper, we establish that critical points to the Weyl energy in such space are smooth up to global conformal diffeomorphisms:
\begin{theorem}
    \label{thm: main}
    Let $M$ be a smooth compact four-manifold and suppose that a Riemannian metric $g \in W^{2,q}(M)$ with $q > 2$ is a critical point of
    \begin{equation*}
        \mathbf W(g) = \int_M |\W_g|^2_g \, d\mu_g \,.
    \end{equation*}
    Then, there exists a positive function $u \in W^{2,q}(M)$ and a $W^{3,q}$ diffeomorphism $\Phi : M \to M$ such that $\Phi^*(u^2g) \in C^{\infty}(M)$.
\end{theorem}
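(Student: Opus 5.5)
The plan has two parts. First, fix a conformal gauge in which the scalar curvature is constant, which turns the Bach-flat equation into an elliptic system. Second, fix a diffeomorphism gauge with harmonic coordinates, which makes that system elliptic in the metric components, and then bootstrap.

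\emph{Step 1: conformal gauge.} Since $q>2$ in dimension four, $W^{2,q}\hookrightarrow C^{0,\alpha}$, the curvature lies in $L^q$, and the Yamabe operator $L_g = -6\Delta_g + \R_g$ has $L^q$ coefficients. I would solve the Yamabe problem in this low-regularity class. Concretely, I would take $u\in W^{2,q}$ positive with $\tilde g = u^2 g$ of constant scalar curvature $\R_{\tilde g}\equiv c$, arguing by the standard subcritical approximation and concentration-compactness. Only continuity of $g$ should be needed for the Sobolev-quotient comparison with the sphere. Alternatively, in the case of positive Yamabe invariant, it suffices to prescribe $\R_{\tilde g}$ to be smooth. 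Because $\mathbf W$ is conformally invariant, $\tilde g$ is still a weak critical point, that is, weakly Bach-flat. Here the Bach tensor is understood distributionally, with $\nabla^k\nabla^l \W_{ikjl}$ paired against test tensors and $\R^{kl}\W_{ikjl}\in L^{q/2}$.

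\emph{Step 2: harmonic coordinates and the elliptic system.} Using the contracted Bianchi identities, $\nabla^l \W_{ikjl}$ is a multiple of the Cotton tensor, which involves $\nabla\Ric$ and $\nabla \R$. With $\R$ constant, the Bach-flat equation becomes, schematically,
\begin{equation*}
\Delta \Ric_{ij} = \nabla^2\R + \Rm * \Ric = \Rm * \Ric ,
\end{equation*}
understood weakly. In harmonic coordinates of $\tilde g$, which exist and have regularity $W^{3,q}$, the Ricci tensor satisfies $\Ric_{ij} = -\tfrac12 \tilde g^{kl}\partial_k\partial_l \tilde g_{ij} + Q(\tilde g,\partial\tilde g)$. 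Together these give a fourth-order system for $\tilde g_{ij}$ with leading part $\tilde g^{ab}\tilde g^{kl}\partial_a\partial_b\partial_k\partial_l \tilde g_{ij}$, which is elliptic with $C^{0,\alpha}$ leading coefficients. I would glue these coordinate charts into a global $W^{3,q}$ diffeomorphism $\Phi$ by comparing the $W^{3,q}$ atlas of harmonic charts with the original smooth structure. The transition maps between harmonic charts are harmonic, hence as regular as the metric. Once $\tilde g$ is known to be smooth in this atlas, the atlas is a smooth structure compatible with the original one via a $W^{3,q}$ homeomorphism, and one uses uniqueness of smooth structures up to diffeomorphism in the appropriate sense. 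Equivalently, I would smooth the identity map to a $W^{3,q}$ diffeomorphism $\Phi$ from the given structure to the harmonic one.

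\emph{Step 3: bootstrap, the main obstacle.} The nonlinearity $\Rm*\Ric$ lies only in $L^{q/2}$, and $q/2>1$ is barely above $L^1$. The fourth-order system is critical in dimension four at $W^{2,2}$, and here we are only slightly supercritical. I would first handle the equation in divergence form, $\partial(\tilde g\,\partial\,\Ric)$, and use $W^{-2,q/2}$ data for a fourth-order operator with $C^\alpha$ coefficients. This gives $\Ric\in L^{s}$ for every $s$ with $1/s \ge 2/q - 1/2$, which is an improvement precisely because $q>2$. Iterating should give $\Ric\in L^p$ for all $p$. From there, $\tilde g\in W^{2,p}$, then $C^{1,\alpha}$ coefficients, and Schauder estimates yield $C^\infty$.

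The delicate points are two. The first is justifying the weak formulation and the commutation of derivatives at the initial regularity, where products like $\tilde g^{-1}\partial^3\tilde g\cdot\partial\tilde g$ must be checked to be integrable. The second is the very first step of the iteration. Since $q>2$ is exactly the threshold, I expect the gain to be small but strictly positive, and finitely many iterations should suffice.
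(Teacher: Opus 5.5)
There is a genuine gap in Step 1, and everything downstream depends on it. You need a constant-scalar-curvature representative $\tilde g=u^2g$ already at the initial regularity $W^{2,q}$, $q>2$, in dimension four. That result is not available. The paper deliberately avoids it, noting that the Yamabe problem for rough metrics in dimension $n\ge 4$ is out of reach at present.

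Your justification that only continuity of $g$ is needed for the comparison with the sphere is not correct. Continuity gives the weak inequality $Y(M,[g])\le Y(S^4)$. The subcritical approximation, however, converges only under the strict inequality. In dimension four with $Y(M,[g])>0$, the strict inequality comes from Schoen's argument. That argument needs the expansion $G=|x|^{-2}+A+o(1)$ of the Green's function of the conformal Laplacian in conformal normal coordinates, and the positive mass theorem applied to $G^2g$. None of this makes sense for a metric that is $C^{0,\alpha}$ but not $C^1$.

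Your fallback for positive Yamabe invariant does not help either. Prescribing a smooth scalar curvature means solving $-6\Delta_g u+\R_g u=fu^3$, which is again a critical nonlinear equation with the same loss of compactness.

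The missing idea is that constant scalar curvature is not needed at the rough stage. It suffices that $\R$ be as regular as the metric, and this comes from a \emph{linear} problem.

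\begin{enumerate}
\item Take $u_1>0$ a first eigenfunction, $-6\Delta_g u_1+\R_g u_1=\lambda u_1$. Then $g_1=u_1^2g$ has $\R_{g_1}=\lambda u_1^{-2}\in W^{k,q}$.
\item In $\Delta\Ric=\B+\Rm*\Ric+\nabla^2\R$ the term $\nabla^2\R$ is therefore no longer the bottleneck, and $\Ric_{g_1}$ gains a derivative. For $k=2$, $2<q<4$ this uses an iteration in $W^{-1,p}$ spaces much like your Step 3.
\item The harmonic-coordinate regularity theorem for metric structures, together with the Sobolev version of Whitney's theorem, then gives a global $\Phi_1$ with $\Phi_1^*g_1$ one derivative better.
\item Since $\Phi_1$ spoils the gain in $\R$, the eigenfunction gauge is redone at every step. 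Because $\B_{\Phi^*(u^2g)}=0$ whenever $\B_g=0$, regardless of the regularity of $\Phi$, the iteration never stops and gives a $C^k$ representative for every $k$.
\item Only at that point is the classical Yamabe problem solved. Your constant-scalar-curvature bootstrap in harmonic coordinates, followed by smoothing the resulting atlas, then completes the proof. The regularity $u\in W^{2,q}$ and $\Phi\in W^{3,q}$ is inherited from the first step.
\end{enumerate}
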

We highlight that the conformal diffeomorphism is global. If the $g$ is not a critical point of the Weyl energy, but we still have some control on the Bach tensor (this could be the case along certain Palais--Smale sequences), we are still able to find a conformal diffeomorphism that makes the metric more regular.
\begin{theorem}
    \label{thm: main Bach}
    Let $M$ be a smooth compact four-manifold and consider a Riemannian metric $g \in W^{k,q}(M)$ with $k \geq 2$ and $q > 2$. Suppose the Bach tensor satisfies $\B_g \in W^{l,q}(M)$ for some integer $l \leq k$. Then, there exists a positive function $u \in W^{k,q}(M)$ and a $W^{k+1,q}$ diffeomorphism $\Phi : M \to M$ such that $\Phi^*(u^2g) \in W^{l+4,q}(M)$.
\end{theorem}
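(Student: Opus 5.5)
The plan is to fix the two gauges separately and then read the Bach tensor as an elliptic system. For the conformal gauge, I would make the scalar curvature constant. For the diffeomorphism gauge, I would pass to harmonic coordinates with respect to a fixed smooth background.

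\textbf{Step 1: conformal gauge.} First I would solve a Yamabe-type problem for $g\in W^{k,q}$. We want a positive $u\in W^{k,q}$ such that $\tilde g=u^2 g$ has constant scalar curvature. Since $q>2=n/2$, we have $W^{2,q}\hookrightarrow C^{0,\alpha}$, and the scalar curvature satisfies $\R_g\in W^{k-2,q}\subset L^q$. The conformal Laplacian $-6\Delta_g+\R_g$ is therefore a well-posed operator with $L^q$ potential. The standard variational/subcritical approximation yields a positive minimizer. Elliptic regularity for $\Delta_g$, whose coefficients are in $W^{k-1,q}$, then bootstraps this to $u\in W^{k,q}$; regularity of $u$ cannot exceed that of $g$.

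An alternative that avoids the Yamabe problem is to prescribe $\R_{\tilde g}$ in a weaker way, for instance by solving a linear equation making $\R_{\tilde g}$ smooth. Either choice suffices, since all we need is that $\R_{\tilde g}$ has regularity independent of $g$. Because the Bach tensor is conformally covariant in dimension four, $\B_{\tilde g}=u^{-2}\B_g\in W^{l,q}$ (using $l\le k$ and the algebra property of $W^{k,q}$).

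\textbf{Step 2: diffeomorphism gauge.} Next I would find $\Phi$ such that $\Phi^*\tilde g$ is harmonic with respect to a smooth background, meaning the identity map is $(\Phi^*\tilde g,h)$-harmonic. Equivalently, $\Phi^{-1}$ is a harmonic map $(M,\tilde g)\to(M,h)$ close to a diffeomorphism. The natural way to get this is by continuation or an implicit function argument: approximate $\tilde g$ by smooth metrics, and use that harmonic maps near the identity are diffeomorphisms. This gives $\Phi\in W^{k+1,q}$, one derivative better than the metric, as the statement claims. In this gauge, $\Ric$ becomes $-\tfrac12 h^{ab}\partial_a\partial_b g+Q(g,\partial g)$, with lower-order terms. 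In harmonic gauge the Bach tensor becomes a fourth-order quasilinear operator: up to lower-order terms it is $\tfrac12\Delta_{g}\bigl(\Ric-\tfrac16\R\, g\bigr)$ plus Hessian terms of $\R$. Since $\R$ is constant (or regular), those terms drop.

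\textbf{Step 3: bootstrap.} The equation becomes roughly $\Delta^2 g = \B + \text{(lower order)}$, with principal symbol $|\xi|^4$, so it is strongly elliptic. Starting from $g\in W^{k,q}$ with $k\ge2$ and $q>2$, the lower-order terms involve products of derivatives up to order three. Sobolev multiplication in dimension four with $q>2$ shows they lie in $W^{k-3,q}$ or better. Elliptic $L^q$ theory for operators with $W^{k,q}$ coefficients then gives one more derivative, and iterating gains regularity until it is capped by $\B\in W^{l,q}$, yielding $\Phi^*(u^2g)\in W^{l+4,q}$.

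\textbf{Main obstacle.} I expect the main obstacle to be the low-regularity endpoint $k=2$. There $\Ric\in L^q$ only, so the fourth-order equation must be interpreted in a weak or distributional sense, with coefficients merely $W^{1,q}\subset C^{0,\alpha}$. The first bootstrap step requires care with products like $\partial^2 g\cdot\partial^2 g\in L^{q/2}$, and I would handle it using the divergence structure of the Bach tensor. Separately, constructing a global harmonic diffeomorphism for a $W^{2,q}$ metric, rather than only local harmonic coordinates, is delicate, and I would rely on the paper's earlier global-gauge results for that.
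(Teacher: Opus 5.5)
There is a genuine gap in Step 1, and Steps 2--3 depend on it. Your bootstrap needs a conformal gauge in which $\R_{\tilde g}$ has regularity independent of $g$, and neither route you offer provides one.

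The Yamabe problem for a $W^{k,q}$ metric on a four-manifold is not standard. Subcritical approximation converges only once $Y(M,[g])<Y(S^4)$ is known. In the positive case, this strict inequality comes in dimension four from Schoen's positive-mass argument, which needs an expansion of the Green's function of the conformal Laplacian at the pole; that expansion is unavailable for rough metrics. The paper states explicitly that this route cannot be used at present.

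The ``linear alternative'' does not exist either. Prescribing $\R_{\tilde g}=f$ is the nonlinear equation $-6\Delta_gu+\R_gu=fu^3$. Any linear choice $-6\Delta_gu+\R_gu=f$ yields $\R_{\tilde g}=fu^{-3}$, which is only as regular as $u\in W^{k,q}$. The paper's eigenfunction gauge, for instance, gives $\R_{\tilde g}=\lambda u^{-2}$.

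Nor can you keep $\nabla^2\R$ inside the operator. At a point with $g_{ij}=\delta_{ij}$, the principal symbol of $\B$ in harmonic coordinates is $h\mapsto-\tfrac14|\xi|^4h+\tfrac1{12}|\xi|^2(\operatorname{tr}h)\,\xi\otimes\xi+\tfrac1{24}|\xi|^4(\operatorname{tr}h)\,g$. This symbol kills $2\xi\otimes\xi+|\xi|^2g$, a conformal-plus-diffeomorphism direction satisfying the linearized harmonic gauge. So the system is elliptic only after the scalar-curvature terms are moved to the data. With $\R_{\tilde g}\in W^{k,q}$, those data are frozen in $W^{k-2,q}$. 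Your bootstrap then stops at $W^{k+2,q}$, and the cases $l=k-1,k$ are lost.

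The missing idea is to re-choose the conformal gauge after each gain. The paper uses the second-order equation $\Delta\Ric=\B+\Rm*\Ric+\nabla^2\R$ to get $\Ric_{g_1}\in W^{k-1,q}$. It turns this into $\Phi_1^*g_1\in W^{k+1,q}$ via \cref{thm: regularity metric struictures}. It then takes a new first eigenfunction $u_2\in W^{k+1,q}$ of the improved metric, whose scalar curvature is now in $W^{k+1,q}$, and repeats. At most four rounds are needed. The conformal factors and diffeomorphisms are composed, and \cref{lemma_Adams} keeps the composites in $W^{k,q}$ and $W^{k+1,q}$.

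Two secondary points in your plan would also need replacing.
\begin{itemize}
\item You do not need, and have not constructed, a global harmonic diffeomorphism. The paper instead uses local harmonic charts, whose transition maps gain regularity (\cref{prop: harmonic atlas}), together with the Sobolev Whitney theorem (\cref{thm: Whitney Sobolev}).
\item As you suspected, for $k=2$ and $2<q<4$ the quadratic terms are not in $W^{k-3,q}$, only in $W^{-1,p_0}$ with $\tfrac1{p_0}=\tfrac2q-\tfrac14$. The paper closes this by the finite iteration $\tfrac1{p_{m+1}}=\tfrac1{p_m}+\tfrac1q-\tfrac12$ on the second-order Ricci equation. This avoids needing a regularity theory for a fourth-order operator with $W^{2,q}$ coefficients, which you do not have at hand.
\end{itemize}
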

To give an abstract viewpoint of \cref{thm: main Bach}, let $M$ more generally be a smooth compact manifold of dimension $n \geq 3$ and consider a Riemannian metric $g \in W^{k,q}(M)$ with $k \geq 2$ and $q > \tfrac{n}{2}$. Define the class
\begin{equation*}
    \llbracket\,g\,\rrbracket_{W^{k,q}} \coloneqq \Bigl\{\Phi^*\bigl(u^{\frac{4}{n-2}}g\bigr) \; : \; u \in W^{k,q}(M) , \; u > 0, \; \Phi \in \mathscr D^{k+1,q}(M)\Bigr\} \,,
\end{equation*}
where $\mathscr D^{k+1,q}(M)$ denotes the space of all $W^{k+1,q}$ diffeomorphisms from $M$ to $M$. If $g$ is a smooth Riemannian metric, it is easy to ruin its regularity simply by pulling it back via a rough diffeomorphism or by multiplying it with a rough conformal factor. In contrast, given a metric $g \in W^{k,q}(M)$, in principle there is no reason why $\llbracket\,g\,\rrbracket_{W^{k,q}}$ should contain a smooth representative. \cref{thm: main Bach} asserts that on a four-manifold, $\llbracket\,g\,\rrbracket_{W^{k,q}}$ has indeed a more regular representative provided that the regularity of the Bach tensor is better than expected. On the other hand, in view of the conformal covariance of the Bach tensor in dimension four and \cref{lemma_Adams} below, the regularity of the Bach tensor is an invariant of the class $\llbracket\,g\,\rrbracket_{W^{k,q}}$. It follows that \cref{thm: main Bach} characterizes the most regular element of the class in terms of the regularity of the Bach tensor.

Indeed, the techniques developed in this paper apply to higher dimensions and yield the following result analog to \cref{thm: main Bach} for dimensions $n \geq 4$:
\begin{theorem}
    \label{thm: main Weyl}
    Let $M$ be a smooth compact manifold of dimension $n \geq 4$ and consider a Riemannian metric $g \in W^{k,q}(M)$ with $k \geq 2$ and $q > \tfrac{n}{2}$. Suppose the Weyl tensor satisfies $\W_g \in W^{l,q}(M)$ for some integer $l \leq k$. Then, there exists a positive function $u \in W^{k,q}(M)$ and a $W^{k+1,q}$ diffeomorphism $\Phi : M \to M$ such that $\Phi^*(u^2g) \in W^{l+2,q}(M)$.
\end{theorem}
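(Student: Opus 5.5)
The strategy is to split the gauge fixing into two steps. First, a conformal change makes the Ricci tensor controlled by the Weyl tensor through a second-order elliptic equation. Second, harmonic coordinates exploit that the metric in such coordinates satisfies an elliptic system driven by the Ricci tensor.

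\textbf{Step 1: conformal gauge.} We look for a conformal factor $u \in W^{k,q}(M)$, $u>0$, such that $\tilde g = u^2 g$ has constant scalar curvature. This is a Yamabe-type problem for a $W^{k,q}$ metric with $q > \tfrac n2$.
\begin{itemize}
\item The coefficients of the conformal Laplacian are then in $W^{k-2,q} \subset L^q$, so the operator is well defined.
\item In fact we only need \emph{some} scalar curvature of controlled regularity, for instance constant or smooth. It therefore suffices to solve $L_g u = f u^{\frac{n+2}{n-2}}$ with a prescribed sign-adapted smooth $f$, after first conformally normalizing $\R_g$ to have a sign (via the first eigenfunction of $L_g$).
\item Elliptic regularity with $W^{k-2,q}$ coefficients gives $u \in W^{k,q}$, and \cref{lemma_Adams} guarantees that products stay in the right spaces.
\item Since $|\W|$ transforms by a power of $u$ and $\W$ is tensorially conformally invariant, $\W_{\tilde g}$ is still in $W^{l,q}$.
\end{itemize}

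\textbf{Step 2: Ricci in terms of Weyl.} With $\R_{\tilde g}$ constant (or smooth), the contracted second Bianchi identity gives, for $n \ge 4$,
\[
\nabla^l \W_{ijkl} = (n-3)\, \C_{ijk},
\]
where $\C$ is the Cotton tensor of the Schouten tensor $P$. Hence $\nabla^l \W_{ijkl}$ is the exterior covariant derivative $d^\nabla P$. Moreover $\Div P = \tfrac{1}{2(n-1)}\, d\R$ is smooth, and $\operatorname{tr} P$ is controlled. Together, $d^\nabla P$ and $\delta^\nabla P$ make up an elliptic first-order (Codazzi/div) system for the symmetric tensor $P$.

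Equivalently, applying $\delta^\nabla$ yields a Weitzenböck-type identity of the form
\[
\Delta \Ric = \Div\Div \W + \Rm * \Ric + \nabla^2 \R,
\]
so that $\Ric$ is one derivative less regular than $\W$ gains. Schematically, $\Ric_{\tilde g} \in W^{l,q}$ modulo lower-order terms.

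\textbf{Step 3: harmonic coordinates.}
\begin{itemize}
\item Harmonic coordinates of $\tilde g$ exist locally and have regularity $W^{k+1,q}$.
\item In them, $\Delta_{\tilde g}\, \tilde g_{ij} = -2\Ric_{ij} + Q(\tilde g, \partial \tilde g)$.
\item Bootstrapping then gives $\tilde g_{ij} \in W^{l+2,q}$ in harmonic charts.
\end{itemize}
To make this \emph{global}, I would not use local harmonic coordinates. Instead I would take a harmonic map $\Phi : (M,\tilde g) \to (M,h)$ close to the identity, with $h$ a smooth background metric, i.e. the DeTurck/Bianchi gauge.
\begin{itemize}
\item Since $g \in W^{k,q}$ with $k\ge 2$, $q>\tfrac n2$, one approximates $\tilde g$ by a smooth $h$ in $W^{2,q}$.
\item Solving the harmonic map equation by the implicit function theorem near the identity yields $\Phi \in \mathscr D^{k+1,q}(M)$.
\item In this gauge, $\Phi_*\tilde g$ satisfies $\Ric(\hat g) = -\tfrac12 \hat g^{ab}\nabla^h_a\nabla^h_b \hat g + \text{l.o.t.}$, which is elliptic in $\hat g$.
\item Feeding in the Ricci regularity from Step 2 and bootstrapping gives $\Phi^*(u^2 g) \in W^{l+2,q}$, after renaming $\Phi^{-1}$ as $\Phi$.
\end{itemize}

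\textbf{Main obstacle.} I expect the hardest step to be the low-regularity elliptic analysis in Step 2 at the bottom level $k=2$. There $\Rm \in L^q$ only, and the Bach/Cotton identities involve $\nabla \W$, which must be interpreted distributionally in $W^{-1,q}$. Solving the div-curl system for $P$ with $L^q$ curvature coefficients requires Sobolev multiplication estimates at borderline exponents, via \cref{lemma_Adams}. Coupling this with the gauge so that the regularity gain is $l+2$, and not less, requires running Steps 2 and 3 simultaneously as a single elliptic system for $\hat g$ in the combined conformal–harmonic gauge.
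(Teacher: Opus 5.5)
\textbf{The globalization in Step 3 does not work as stated.} Your first three bullets of Step 3 are exactly the local part (Step 1) of the proof of \cref{thm: regularity metric struictures}. The global harmonic-map gauge, however, is not justified. As $h\to\tilde g$, the linearization at $\Phi=\mathrm{id}$ of $\Phi\mapsto\tau_{\tilde g,h}(\Phi)$ degenerates to the Jacobi operator $X\mapsto\Delta_{\tilde g}X+\Ric_{\tilde g}(X)$ of the identity, and this operator annihilates every Killing field of $\tilde g$. So the implicit function theorem fails whenever $\tilde g$ has continuous symmetries. The model example of the theorem is such a case: $g=\psi^*g_{S^n}$ with $\psi$ a rough diffeomorphism, where the eigenfunction gauge is constant. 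Taking $h$ closer to $\tilde g$ only makes the linearization more nearly singular, so the perturbation argument does not close. Away from the identity, harmonic diffeomorphisms onto a smooth target are not available by soft arguments.

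The missing idea is the paper's globalization. By \cref{prop: harmonic atlas}, once $\tilde g_{ij}\in W^{l+2,q}_{loc}$ in harmonic charts, these charts are mutually $W^{l+4,q}$-compatible and generate a $W^{l+4,q}$ differential structure. The Sobolev Whitney theorem (\cref{thm: Whitney Sobolev}, a Meyers--Serrin type smoothing of charts) yields a $C^\infty$ structure inside it. That structure is $C^\infty$-diffeomorphic to the original one, because both are smooth and $C^1$-compatible. The resulting diffeomorphism is $W^{k+1,q}$ with respect to the original charts and pulls $\tilde g$ back to a $W^{l+2,q}$ metric, with no nondegeneracy assumption.

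\textbf{Step 1 asks for more than can currently be proved, and more than you need.} If the first eigenvalue $\lambda$ of $L_g$ is positive, solving $L_gu=fu^{\frac{n+2}{n-2}}$ with smooth $f>0$ is a critical-exponent problem. For constant $f$ it is the Yamabe problem, which the paper points out is not currently available for such rough metrics when $n\ge 4$. For prescribed $f$ there are even Kazdan--Warner obstructions. You give no argument for this step.

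The eigenfunction alone already suffices. By \cref{lemma: good conformal gauge}, $\R_{\tilde g}=\lambda u^{-\frac{4}{n-2}}\in W^{k,q}\subset W^{l,q}$ since $l\le k$. Hence $\nabla^2\R_{\tilde g}\in W^{l-2,q}$, which is all that \eqref{eq: Weyl} requires. With this gauge your Step 2 is sound: it is the paper's equation, and since $\Rm$ is expressed through $\W$, $\Ric$ and $\R$, the bootstrap reaches $\Ric_{\tilde g}\in W^{l,q}$. (The paper instead iterates the eigenfunction gauge and \cref{thm: regularity metric struictures} one derivative at a time, as in the proof of \cref{thm: main Bach}.) \cref{thm: regularity metric struictures} then concludes.
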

We remark that the $l \leq k$ hypothesis stops us from proving that any compact Riemannian manifold with smooth Weyl tensor is conformally diffeomorphic to a smooth one. Such obstruction is also present in \cref{thm: main Bach} and it seems unavoidable (cf. \cite[Theorem 4.5]{DeTurck_Kazdan}). Nevertheless, similar to \cref{thm: main}, we are able to overcome this obstruction if the Weyl tensor vanishes:
\begin{theorem}
    \label{thm: main Weyl-flat}
    Let $M$ be a smooth compact manifold of dimension $n \geq 4$ and consider a Riemannian metric $g \in W^{2,q}(M)$ with $q > \tfrac{n}{2}$. Suppose that $\W_g \equiv 0$. Then, there exists a positive function $u \in W^{2,q}(M)$ and a $W^{3,q}$ diffeomorphism $\Phi : M \to M$ such that $\Phi^*(u^2g) \in C^{\infty}(M)$.
\end{theorem}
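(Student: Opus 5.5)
The idea is to reduce to the local-to-global machinery that the paper builds for \cref{thm: main Weyl}, using the vanishing of the Weyl tensor to bootstrap indefinitely.

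\emph{Step 1: local conformal flatness.} Since $\W_g\equiv0$ with $g\in W^{2,q}$, $q>n/2$, every point has a neighbourhood carrying a conformal factor that flattens the metric. To see this, use $W^{2,q}$ harmonic coordinates, in which $g_{ij}\in W^{2,q}\subset C^{0,\alpha}$. In these coordinates I solve for $u$ with $u^{2}g$ flat, i.e.\ the Schouten tensor satisfies $\nabla^2\log u - d\log u\otimes d\log u+\tfrac12|d\log u|^2g=-P_g$ (up to normalisation). For $n\ge4$, the integrability condition of this overdetermined system is exactly $\W=0$ together with the Cotton tensor $C=\Div \W/(n-3)=0$. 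Since $P_g\in W^{1,q}$, the system is a Frobenius-type first-order system for $(\log u, d\log u)$ with $W^{1,q}$ coefficients. I will solve it by the weak Frobenius theorem, or alternatively by mollification plus compactness. This yields local $u_\alpha\in W^{2,q}$ with $u_\alpha^2 g$ flat. The developing maps are then $W^{3,q}$ local conformal diffeomorphisms onto open subsets of $\mathbb S^n$, because flat metrics give isometries to Euclidean space of one more derivative.

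\emph{Step 2: smooth atlas.} The transition maps between these charts are conformal maps of $\mathbb S^n$, hence Möbius and real-analytic, by Liouville's theorem for $W^{1,n}$ (indeed $W^{3,q}$) conformal maps in dimension $\ge3$. These charts therefore define a new smooth structure on $M$ together with a smooth conformally flat metric $\tilde g$: patch the local round metrics by a smooth partition of unity subordinate to the new structure, and note that all local metrics are conformal to each other by smooth factors. The identity map $(M,\text{old})\to(M,\text{new})$ is a $W^{3,q}$ homeomorphism, and $g=v^2\tilde g$ with $v\in W^{2,q}$ positive, since on each chart $v$ is the ratio of the flattening factors.

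\emph{Step 3: compare smooth structures.} This is where I expect the main obstacle. I must produce a $W^{3,q}$ diffeomorphism $\Phi$ of $M$ that intertwines the old and new smooth structures. The identity is a $W^{3,q}$, and hence $C^{1}$ (indeed $C^{2,\beta}$), diffeomorphism between them. By Whitney approximation, $C^1$-close smooth maps are diffeomorphisms, so there is a diffeomorphism $\Psi$ that is smooth from the new structure to the old one. Setting $\Phi=\Psi^{-1}$ viewed in the old structure, $\Phi$ has the regularity of $\mathrm{id}^{-1}\circ$(smooth), which is $W^{3,q}$. I would rather reuse whatever lemma the paper proves for \cref{thm: main Weyl}, which presumably does exactly this gluing/approximation step (together with \cref{lemma_Adams} for composition estimates in $W^{k,q}$).

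\emph{Conclusion.} With $\Phi$ as above, $\Phi^*\tilde g$ is smooth in the original structure, and $\Phi^*g=(v\circ\Phi)^2\,\Phi^*\tilde g$. Setting $u=1/(v)$ and applying the pullback, I get $\Phi^*(u^2g)=\Phi^*\tilde g\in C^\infty$. Finally, $u\in W^{2,q}$ positive, since $v\in W^{2,q}$ is positive and continuous.
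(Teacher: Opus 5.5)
Your Steps 2 and 3 are sound, but Step 1 has a genuine gap. It is also exactly where the analytic content of the theorem sits, since local conformal flatness of $g$ is itself a consequence of the statement.

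First, the regularity count is off by one derivative. For $g\in W^{2,q}$ the Christoffel symbols lie in $W^{1,q}$, but the Schouten tensor $P_g$ is a curvature quantity and lies only in $L^q$, not $W^{1,q}$. Hence the system $\partial_i\omega_j=\Gamma^k_{ij}\omega_k+P_{ij}+\omega_i\omega_j-\tfrac12|\omega|^2g_{ij}$ for $\omega=d\log u$ has an $L^q$ term. Its integrability conditions $\W=0$, $\C=0$ hold only in $L^q$ and $W^{-1,q}$ respectively.

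Second, no off-the-shelf weak Frobenius theorem covers this system. The classical proof integrates along curves, which makes no sense for $L^q$ data. The weak Pfaff-system theorems I know of in the literature treat linear systems $\partial_iY=A_iY$ and need $A_i$ at least in the scale-critical space $L^n$ (typically $L^p$ with $p>n$). Here $q$ may be anywhere in $(n/2,n)$, and your system is nonlinear besides. You would first have to pass to a linear formulation, such as the normal conformal tractor connection, and then exploit its special second-order structure by a new argument.

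The fallback ``mollification plus compactness'' fails as well. Mollifying $g$ destroys $\W=0$, so the approximating overdetermined systems have no solutions to pass to the limit. Controlling almost-integrable systems quantitatively is again the hard part.

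The paper never solves an overdetermined system. It works with the elliptic equation \eqref{eq: Weyl}, $\Delta\Ric=\Div\Div\W+\Rm*\Ric+\nabla^2\R$, whose first term vanishes for $\Phi^*(u^2g)$ whatever the regularity of $u$ and $\Phi$. The first-eigenfunction gauge of \cref{lemma: good conformal gauge} makes $\nabla^2\R$ regular. \cref{thm: regularity metric struictures}, which combines harmonic coordinates with \cref{thm: Whitney Sobolev}, then turns Ricci regularity into a better global representative. Iterating yields a $C^k$ representative for any $k$. Finally, the Yamabe problem makes $\R$ constant, and an elliptic bootstrap for $(\Ric,g_{ij})$ in harmonic coordinates gives $C^\infty$.

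Your Steps 2--3 are a legitimate alternative for that last stage. They consist of Liouville's theorem, the M\"obius atlas, $u=1/v$, and the comparison of $W^{3,q}$-compatible smooth structures, which is essentially the second half of \cref{thm: Whitney Sobolev}. Once a $C^k$ representative with $k$ large is available, local conformal flatness follows from the classical Weyl--Schouten theorem, and your argument can replace the Yamabe step. It cannot, however, replace the bootstrap that produces the regularity in the first place.
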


An analogue result was proven by the authors \cite{AvalosCogoRoyo2} in dimension $n = 3$ for metrics $g \in W^{k,q}(M)$ with $k \geq 2$ and $q > 3$, where the conformally invariant Cotton tensor
\vspace{-0.3cm}
\begin{equation*}
    \label{eq: Cotton def}
    \C_{ijk} = \nabla_k  \R_{ij} - \nabla_j \R_{ik} + \frac{1}{4} \bigl( \nabla_j \R g_{ik} - \nabla_k \R g_{ij}\bigr)
\end{equation*}
plays the role of the Bach tensor in \cref{thm: main Bach}. In fact, the techniques developed in the present paper enable to improve our previous result:
\begin{theorem}
    \label{thm: main Cotton}
    Let $M$ be a smooth compact three-manifold and consider a Riemannian metric $g \in W^{k,q}(M)$ with $k \geq 2$ and $q > 2$. Suppose that the Cotton tensor satisfies $\C_g \in W^{l,q}(M)$ for some integer $l \leq k$. Then, there exists a positive function $u \in W^{k,q}(M)$ and a $W^{k+1,q}$ diffeomorphism $\Phi : M \to M$ such that $\Phi^*(u^2g) \in W^{l+3,q}(M)$. 
\end{theorem}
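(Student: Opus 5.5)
We would follow the strategy that the paper develops for the four-dimensional Bach case, transplanted to dimension three. Here the Cotton tensor plays the role of the Bach tensor: it is conformally invariant in dimension three and is a third-order expression in the metric.

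\textbf{Step 1: conformal gauge.} We first fix the conformal factor by solving a Yamabe-type equation, that is, we make the scalar curvature constant or at least as regular as possible. Since $g \in W^{k,q}$ with $q>2=\tfrac n2$ for $n=3$, the conformal Laplacian has coefficients regular enough for elliptic theory on $W^{k,q}$. A positive solution $u\in W^{k,q}$ of $L_g u = \lambda u^{5}$, or of the linear equation $L_g u=0$ or $=u$ depending on the sign of the Yamabe invariant, then gives $\tilde g = u^4 g$ whose scalar curvature $\R_{\tilde g}$ is constant or lies in $W^{k,q}$ with one extra degree of control. We would use the first eigenfunction of the conformal Laplacian, which avoids needing the full Yamabe problem: it yields $\R_{\tilde g} = \lambda_1 u^{-4}$, and this has the regularity of $u$. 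Bootstrapping then improves $\R_{\tilde g}$ alongside the metric.

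\textbf{Step 2: harmonic coordinates and the elliptic system.} We next take a $W^{k+1,q}$ diffeomorphism $\Phi$, obtained globally by a DeTurck-type harmonic map gauge with respect to a smooth background metric $h$. In this gauge $\Delta_{\tilde g}\tilde g_{ij} = -2\Ric_{ij} + Q(\tilde g,\partial\tilde g)$ plus smooth $h$-terms. The key identity is the divergence of the Cotton tensor: contracting gives an equation of the form
\begin{equation*}
\Delta \Ric_{ij} = \nabla^k \C_{ijk} + \tfrac14\nabla_i\nabla_j \R + \Rm * \Ric ,
\end{equation*}
using the contracted Bianchi identity ($\nabla^j \R_{ij}=\tfrac12\nabla_i\R$) and commuting derivatives. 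Thus $\Ric$ solves a second-order elliptic system whose right-hand side involves $\nabla \C \in W^{l-1,q}$, $\nabla^2\R$, and curvature-quadratic terms. Coupling this with the gauged equation for the metric gives a fourth-order elliptic system for $\tilde g$ in divergence form. Equivalently, we could view $\Ric$ as an independent unknown.

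\textbf{Step 3: bootstrap.} Starting from $\tilde g\in W^{k,q}$ and $\Ric \in W^{k-2,q}$, each pass of elliptic regularity for the $\Ric$ equation gains two derivatives on $\Ric$, up to the limit set by $\C\in W^{l,q}$. This caps $\Ric$ at $W^{l+1,q}$, and then the metric equation gives $\tilde g\in W^{l+3,q}$. Here $q>2$ in dimension three ensures $W^{1,q}\subset L^\infty$-type multiplication properties, namely $W^{2,q}$ is an algebra and $W^{2,q}\times W^{0,q}\to W^{0,q}$; this is what lets the quadratic terms be handled at each stage. The scalar curvature term must be improved simultaneously by the equation from Step 1, and the hypothesis $l\le k$ ensures that the conformal factor and diffeomorphism stay in $W^{k,q}$ and $W^{k+1,q}$.

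\textbf{Main obstacle.} We expect the hardest part to be the low-regularity start. At $k=2$ the Ricci tensor is only $L^q$, so the Cotton tensor, $\Delta \Ric$, and the quadratic terms exist only distributionally, in negative Sobolev spaces. One must verify that the identities above hold weakly and that elliptic estimates apply in $W^{-1,q}$ and $W^{-2,q}$ with rough $W^{2,q}$ coefficients. This is exactly where the restriction $q>2$ rather than $q>3$ is used, improving \cite{AvalosCogoRoyo2}. We would rely on the Sobolev multiplication lemmas (such as \cref{lemma_Adams}) and on duality to obtain the first gain of regularity, after which the bootstrap runs in positive-order spaces.
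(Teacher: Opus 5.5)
\textbf{Gap 1: a single conformal factor cannot give $W^{l+3,q}$ when $l=k$.} Your ingredients are the paper's: the first-eigenfunction gauge, the identity $\Delta\Ric = \Div\C + \Rm*\Ric + \nabla^2\R$, and harmonic coordinates. The trouble is that the scalar curvature does not improve with one conformal factor. In your Step~1, $\R_{\tilde g} = \lambda_1 u^{-4}$ with $L_g u = \lambda_1 u$, so its regularity is fixed by $g\in W^{k,q}$. Nothing in the bootstrap of $\tilde g$ feeds back into $u$.

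Here is a concrete obstruction. Let $g_0$ be the round metric on $S^3$ and $g = w^4 g_0$, with $w>0$ in $W^{k,q}$ but not in $W^{k+1,q}$.
\begin{itemize}
\item[$\cdot$] $\C_g=\C_{g_0}=0$, so $l=k$ is allowed.
\item[$\cdot$] $\phi = uw$ solves $L_{g_0}\phi = \lambda_1 w^4\phi$ with $\lambda_1>0$, hence $\phi\in W^{k+2,q}$.
\item[$\cdot$] But $\R_{\tilde g} = \lambda_1 w^4\phi^{-4}\notin W^{k+1,q}$.
\item[$\cdot$] For any $W^{k+1,q}$ diffeomorphism $\Phi$, \cref{lemma_Adams} gives $\R_{\Phi^*\tilde g}=\R_{\tilde g}\circ\Phi\notin W^{k+1,q}$.
\item[$\cdot$] However, $\Phi^*\tilde g\in W^{k+3,q}$ would force $\R_{\Phi^*\tilde g}\in W^{k+1,q}$, a contradiction.
\end{itemize}
So $\nabla^2\R$ caps you at $W^{k+2,q}$.

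The missing idea is to re-gauge after every gain. Once $\Phi_1^*(u_1^4g)$ is one derivative smoother (via \cref{thm: regularity metric struictures}), the paper takes a \emph{new} first eigenfunction $u_2\in W^{k+1,q}$ of the conformal Laplacian of the improved metric, so the scalar curvature also gains a derivative, and repeats. The composite factor $(u_3\circ\Phi_2^{-1}\circ\Phi_1^{-1})(u_2\circ\Phi_1^{-1})u_1$ stays in $W^{k,q}$, and $\Phi_1\circ\Phi_2\circ\Phi_3$ stays in $W^{k+1,q}$, by \cref{lemma_Adams}. Relatedly, the role of $l\le k$ is to keep $\Phi_j^*\C\in W^{l,q}$ under pullback by a $W^{k+1,q}$ diffeomorphism (part $(ii)$ of \cref{lemma_Adams}), not to control the conformal factor.

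\textbf{Gap 2: the global gauge is asserted, not constructed.} Your global DeTurck gauge is exactly the step that needs proof. A harmonic diffeomorphism $(M,\tilde g)\to(M,h)$ need not exist:
\begin{itemize}
\item[$\cdot$] Eells--Sampson needs a nonpositively curved target, which a general three-manifold does not admit, and even then it does not yield a diffeomorphism.
\item[$\cdot$] An implicit-function argument near the identity, with $h$ a smooth approximation of $\tilde g$, needs invertibility of $V\mapsto\Delta V+\Ric(V)$. Its kernel contains all Killing fields and is not controlled in general.
\end{itemize}
The paper never uses a global gauge. It works in local harmonic charts and shows their transition maps gain regularity with the metric (\cref{prop: harmonic atlas}). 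It then invokes the Sobolev Whitney theorem (\cref{thm: Whitney Sobolev}) to obtain a $C^\infty$ structure compatible with the harmonic atlas and $C^\infty$-diffeomorphic to the original one. This is packaged in \cref{thm: regularity metric struictures}, applied once per iteration. You would need either this globalization or an actual existence proof for your gauge.
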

Compared with \cite[Theorem A]{AvalosCogoRoyo2}, here we work with $q > 2$ and without an orientability assumption. \cref{thm: main Cotton} can be combined with our work in \cite{AvalosCogoRoyo1}  to establish the resolution of the Yamabe problem for $W^{2,q}(M)$ metrics with $q > 2$, provided that the Cotton tensor is in $L^q(M)$. To the best of our knowledge, this is the first resolution of the Yamabe problem for a class of metrics that are not necessarily $C^{1,\alpha}(M)$ and that does not require non-positive Yamabe invariant.
\begin{corollary}
    \label{cor: Yamabe with Cotton}
    Let $M$ be an orientable smooth compact three-manifold and suppose that $g \in W^{2,q}(M)$ is a Riemannian metric with Cotton tensor $\C_g \in L^{q}(M)$ for $q > 2$. Then, there exists a metric in $\llbracket\,g\,\rrbracket_{W^{2,q}}$ of constant scalar curvature.
\end{corollary}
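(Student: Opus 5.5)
The plan is to apply \cref{thm: main Cotton} with $k=2$ and $l=0$ to improve the regularity of a representative of the class, and then to invoke the Yamabe result of \cite{AvalosCogoRoyo1} for that more regular representative. Since $g \in W^{2,q}(M)$ with $q>2$ and $\C_g \in L^q(M) = W^{0,q}(M)$, \cref{thm: main Cotton} gives a positive $u \in W^{2,q}(M)$ and $\Phi \in \mathscr D^{3,q}(M)$ such that $\tilde g \coloneqq \Phi^*(u^2 g) \in W^{3,q}(M)$. In dimension three, $u^2 = (u^{1/2})^{4/(n-2)}$. Since $u$ is positive and continuous (by the embedding $W^{2,q} \hookrightarrow C^0$ for $q>3/2$), $u^{1/2}\in W^{2,q}(M)$, so $\tilde g \in \llbracket\,g\,\rrbracket_{W^{2,q}}$. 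Moreover $W^{3,q}\hookrightarrow W^{2,p}$ for some $p>3$ when $q>2$, because $W^{1,q}\hookrightarrow L^{3q/(3-q)}$ and $3q/(3-q)>6>3$ for $2<q<3$; the case $q\ge 3$ is immediate. In particular $\tilde g\in W^{2,p}$ with $p>3$, hence $\tilde g \in C^{1,\alpha}$.

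Next we solve the Yamabe problem for $\tilde g$. In \cite{AvalosCogoRoyo1} the authors treat the Yamabe problem for $W^{2,q}$ metrics with $q>n/2$. The delicate case is positive Yamabe invariant, which needs the strict inequality $Y(M,[\tilde g])<Y(S^3)$ to obtain compactness. Presumably that paper proves this inequality under extra regularity or local structure, for instance $\tilde g\in W^{2,p}$ with $p>n$, where a positive mass theorem and a Schoen-type test function argument apply. Our $\tilde g$ sits in exactly that regime. The orientability hypothesis is likely where the positive mass theorem, or a spin structure, enters in the three-dimensional argument of \cite{AvalosCogoRoyo1}. So I will quote their theorem to obtain a positive $v \in W^{2,p}(M)$ such that $v^{4}\tilde g$ has constant scalar curvature.

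Finally, we transport the conclusion back. We get $v^4\Phi^*(u^2g) = \Phi^*\bigl((v\circ\Phi^{-1})^4 u^2 g\bigr)$, so we need $w \coloneqq (v\circ \Phi^{-1})\, u^{1/2} \in W^{2,q}(M)$ positive. This follows from \cref{lemma_Adams}-type composition and multiplication properties: $\Phi^{-1}\in W^{3,q}$ and $v\in W^{2,q}$ give $v\circ\Phi^{-1}\in W^{2,q}$, and $W^{2,q}$ is an algebra for $q>3/2$. Hence $w^{4}g$ pulled back by $\Phi$ is a constant scalar curvature metric in $\llbracket\,g\,\rrbracket_{W^{2,q}}$. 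The main obstacle is the second step: checking that the hypotheses under which \cite{AvalosCogoRoyo1} establishes $Y<Y(S^3)$ (and existence) are met by $\tilde g\in W^{3,q}$. If that paper requires more, such as $C^{1,\alpha}$ plus local conformal flatness near a point, I would instead handle the positive case directly. I would build the Schoen test function from the Green's function of the conformal Laplacian of $\tilde g$, whose expansion at the pole is controlled by $W^{3,q}$ regularity, and apply the rough positive mass theorem available for $W^{2,p}$, $p>3$, asymptotically flat metrics.
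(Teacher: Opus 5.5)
Your proposal is correct and follows the paper's proof: apply \cref{thm: main Cotton} with $k=2$, $l=0$ to get a $W^{3,q}$ representative, embed $W^{3,q}\hookrightarrow W^{2,p}$ with $p>3$ (the paper takes $p=2q$), and quote Theorem A of \cite{AvalosCogoRoyo1}, which applies directly in that regime, so your fallback via the Green's function and the positive mass theorem is not needed. Your bookkeeping of $u^2=(u^{1/2})^4$ and of the transported factor $(v\circ\Phi^{-1})u^{1/2}\in W^{2,q}(M)$ simply spells out the identity $\llbracket\,\Phi^*(u^2g)\,\rrbracket_{W^{2,q}}=\llbracket\,g\,\rrbracket_{W^{2,q}}$, which the paper uses without comment.
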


The main idea underlying the proofs stems from an elliptic PDE for the Ricci tensor, where the right-hand side is dominated by a conformally covariant term and a scalar-curvature term. For instance, in four dimensions we have an equation of the form
\vspace{-0.25cm}
\begin{equation*}
   \Delta \Ric = \B + \Rm * \Ric + \nabla^2 \R \,,
\end{equation*}
and similarly in other dimensions with $\B$ replaced by $\Div\C$ or $\Div\Div\W$. The notation $A*B$ for tensors $A$ and $B$ stands for some linear combination of $g$-contractions of the tensor product $A \otimes B$. Notice that the Bach tensor is conformally covariant, while $\Rm*\Ric$ is lower order. The idea then is to conformally deform the metric in a way such that $\nabla^2\R$ has better regularity. 

In \cite{AvalosCogoRoyo2}, we used our resolution of the Yamabe problem \cite{AvalosCogoRoyo1} in dimension $n = 3$ in order to bring the scalar curvature to a constant and get rid of the term $\nabla^2\R$. Nevertheless, the Yamabe problem turns out to be much harder in higher dimensions if the metric is rough, and the same idea can not be applied at the moment. In this paper, we use a first eigenfunction of the conformal Laplacian as a conformal factor, and obtain a metric whose scalar curvature is more regular, instead of constant. This is sufficient in order to gain regularity of the Ricci curvature, and to run a bootstrap argument when combined with the equation
\begin{equation*}
    g^{pq} \partial_p\partial_qg_{ij} = -2\R_{ij} + Q_{ij}\bigl(g, \partial g\bigr)
\end{equation*}
satisfied by the metric components in harmonic coordinates, where $Q_{ij}$ is quadratic in $\partial g$. To establish our global results, an essential step will be to extend some fundamental properties of $C^k$ differential structures to differential structures of Sobolev class. In this regard, we additionally give a modern proof of a classical theorem of H. Whitney, which could be of independent interest,  
see \cref{section: Sobolev version a theorem of Whitney}.

\vspace{-0.1cm}
\subsection{Outline}
The remainder of the paper is organized as follows. In \cref{section: Preliminaries}, we fix our notation and collect some geometric and analytical preliminaries needed in the later sections. In \cref{section: Sobolev version a theorem of Whitney} we prove the aforementioned technical result on differential structures of Sobolev class, and use it in \cref{section: Regularity of metric structures} in order to study the most regular global representative of an isometry class. Finally, \cref{section: Regularity of conformal structures} is dedicated to the proofs of the main theorems.

\vspace{-0.1cm}
\subsection*{Acknowledgments} The authors thank David Maxwell for interesting discussions related to this work. A. Cogo and A. Royo Abrego are grateful to the \emph{University of T\"ubingen} and the \textit{Centro di Ricerca Matematica Ennio De Giorgi}, respectively, for their hospitality during the realization of part of this work.

\section[{\textbf{Preliminaries}}]{Preliminaries}
\label{section: Preliminaries}

We will make extensive use of Sobolev spaces $W^{k,p}$ and $W^{-k,p}$ on compact manifolds for $k \in \nN_0$ and $1 < p < \infty$. We refer the reader to \cite{avalos2024sobolev} or \cite[Section 2.1]{AvalosCogoRoyo1} for precise definitions and properties.

\subsection{Riemannian curvature for metrics of Sobolev class}

We adopt the convention
\begin{align*}
    \Rm(X, Y, Z, W) &= g\bigl(\nabla_X \nabla_Y W - \nabla_Y\nabla_X W - \nabla_{[X, Y]} W, Z\bigr)
    \\
    \Ric(X,Y) &= \sum_{i=1}^n\Rm(X,e_i,Y,e_i)
    \\
    \R &= \sum_{i=1}^n\Ric(e_i,e_i)
\end{align*}
for the Riemann, Ricci, and scalar curvatures respectively, where
$X,Y,Z,W$ are arbitrary vector fields and $\{e_1, \ldots, e_n\}$ an arbitrary local orthonormal frame. This amounts to the index expressions
\begin{equation*}
    \R_{ijkl} = g_{pk}(\partial_i \Gamma^p_{jl} - \partial_j \Gamma^p_{il} + \Gamma^p_{iq}\Gamma^q_{jl} - \Gamma^p_{jq}\Gamma^q_{il}) \,, \quad \R_{ij} = g^{pq}\R_{ipjq} \,, \quad \R = g^{pq}\R_{pq} \,.
\end{equation*}
The Weyl tensor
\begin{align*}
    \W_{ijkl} &= \R_{ijkl} -\frac{1}{n-2}\left(g_{ik}\R_{jl} - g_{il}\R_{jk} - g_{jk}\R_{il} + g_{jl}\R_{ik}\right) 
    \\
    &\quad + \frac{\R}{(n-1)(n-2)} \left(g_{ik}g_{jl} - g_{il}g_{jk}\right)
\end{align*}
is the traceless part of $\R_{ijkl}$, which vanishes in dimension three or less and satisfies $\W_{u^{\frac{4}{n-2}}g} = u^{\frac{4}{n-2}}\W_g$. In dimension $n \geq 4$, we define the Bach tensor
\begin{equation}\label{def: Bach}
    \B_{ij} = \frac{1}{n-3}\nabla^p\nabla^q  \W_{qipj} + \frac{1}{n-2} \R^{pq}\W_{qipj} \,,
\end{equation}
and remark that the conformal covariance $\B_{u^2 g} = u^2 \B_g$ holds in dimension $n = 4$. Finally, we define the Cotton tensor
\begin{equation*}
    \C_{ijk} = \frac{1}{n-2}\bigl(\nabla_k  \R_{ij} - \nabla_j \R_{ik}\bigr) + \frac{1}{2(n-1)} \bigl( \nabla_j \R g_{ik} - \nabla_k \R g_{ij}\bigr)\,,
\end{equation*}
which 
is conformally invariant in dimension $n = 3$. We remark that $\B$ and $\C$ are fourth and third order in $g$ respectively, and that
\begin{equation}
    \label{eq: Cotton-Weyl}
    \nabla^p\W_{ijkp} = (n-3) \, \C_{kij} \,.
\end{equation}

\begin{lemma}
    \label{lemma: curvature continuity}
    Let $M$ be a smooth manifold of dimension $n \geq 3$ and consider a Riemannian metric $g \in W^{2,q}(M)$ for $q > \max\{2,\tfrac{n}{2}\}$. Then, the maps
    \begin{align*}
         &W^{2,q}(M) \to L^{q}(M) \,, \qquad \;\;\quad  g \mapsto \Rm_g 
         \\
         &W^{2,q}(M) \to W^{-1,q}(M) \,, \quad \quad  g \mapsto \C_g
         \\
         &W^{2,q}(M) \to W^{-2,q}(M) \,, \quad \quad g \mapsto \B_g 
    \end{align*}
    are continuous. Moreover, the Bianchi and Ricci identities
    \begin{equation*}
        \nabla_i\R_{jklm} + \nabla_k\R_{ijlm} + \nabla_j\R_{kilm} = 0
    \end{equation*}
    \begin{equation*}
        \nabla_i\nabla_j \mathbf{u}^{k}_{\;\;l} - \nabla_j\nabla_i\mathbf{u}^{k}_{\;\;l} = \R_{ij\;\;p}^{\;\;\;k} \mathbf{u}^{p}_{\;\;l} - \R_{ij\;\;l}^{\;\;\;p} \mathbf{u}^{k}_{\;\;p}
    \end{equation*}
    hold for any $(1,1)$-tensor field $\mathbf{u} \in L^q(M)$.
\end{lemma}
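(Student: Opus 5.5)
We argue in local coordinates, where all three statements reduce to the multiplication properties of Sobolev spaces on a compact manifold. Cover $M$ by finitely many coordinate charts with a subordinate partition of unity. In each chart, $g_{ij}\in W^{2,q}$ with $q>n/2$ embeds into $C^0$, so $g$ is uniformly positive definite on compacts. Moreover $g\mapsto g^{-1}$ is continuous $W^{2,q}\to W^{2,q}$, because $W^{2,q}$ is a Banach algebra for $q>n/2$ and inversion is a smooth map on positive definite matrices. The Christoffel symbols $\Gamma=g^{-1}*\partial g$ therefore lie in $W^{1,q}$ and depend continuously on $g$, since the product $W^{2,q}\times W^{1,q}\to W^{1,q}$ is continuous for $q>n/2$.

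\emph{The curvature map.} From the index formula, $\R_{ijkl}=g*\partial\Gamma+g*\Gamma*\Gamma$. The term $\partial\Gamma$ lies in $L^q$. For the quadratic term, $W^{1,q}\cdot W^{1,q}\subset L^q$ holds for $q>n/2$: if $q\ge n$ it is immediate, and otherwise $W^{1,q}\hookrightarrow L^{nq/(n-q)}$ and $2q\le nq/(n-q)$ iff $q\ge n/2$. Multiplication by $g\in C^0\cap W^{2,q}$ preserves $L^q$. So $g\mapsto\Rm_g\in L^q$ is continuous, and by contraction with $g^{-1}$ so are $\Ric_g$, $\R_g$ and $\W_g$.

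\emph{The Cotton and Bach maps.} We need covariant derivatives of $L^q$ tensors in negative Sobolev spaces. For $T\in L^q$, write $\nabla T=\partial T+\Gamma*T$, which makes sense distributionally. Here $\partial T\in W^{-1,q}$, and $\Gamma*T\in L^{r}$ for some $r$ with $L^r\hookrightarrow W^{-1,q}$; this follows by duality from $W^{1,q'}\hookrightarrow L^{r'}$ and the product estimate $W^{1,q}\cdot L^q\to W^{-1,q}$, valid precisely when $q>n/2$. Hence $\C_g\in W^{-1,q}$ depends continuously on $g$.

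For $\B_g$ we apply $\nabla$ a second time to a $W^{-1,q}$ tensor. This requires the multiplication $W^{1,q}\times W^{-1,q}\to W^{-2,q}$, which is dual to $W^{1,q}\times W^{2,q'}\to W^{1,q'}$. That dual product estimate holds when $q>n/2$ and $q'$ is in the right range, which is where the hypothesis $q>\max\{2,n/2\}$ enters. The remaining term $\Ric*\W\in L^{q}\cdot L^q\subset L^{q/2}$ must be shown to lie in $W^{-2,q}$. This follows from $W^{2,q'}\hookrightarrow L^{(q/2)'}$, a Sobolev embedding which holds for $q>n/2$ together with $q\ge 2$. All the multiplications used are continuous bilinear maps, so continuity of $g\mapsto\B_g$ follows.

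The \textbf{main obstacle} is carrying out these bilinear estimates in negative Sobolev spaces cleanly and at the borderline exponents; this is where $q>2$ is genuinely needed in low dimensions.

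\emph{The identities.} Approximate $g$ in $W^{2,q}$ by smooth metrics $g_m$, and $\mathbf u\in L^q$ by smooth tensors $\mathbf u_m$. For each $m$ the Bianchi identity and the Ricci identity hold classically. Both sides depend continuously on the data in $W^{-1,q}$, respectively $W^{-2,q}$ or $W^{-1,q}$, by the product estimates above; for the Ricci identity the right-hand side is $\Rm*\mathbf u\in L^{q/2}\subset W^{-2,q}$, with continuity shown as for $\B_g$. Passing to the limit $m\to\infty$ in the sense of distributions yields the identities for $g$ and $\mathbf u$.
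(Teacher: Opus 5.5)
Your proposal is correct and follows the same route as the paper. You write $\Rm=\partial\Gamma+\Gamma*\Gamma$ with $\Gamma\in W^{1,q}$, and get the Cotton and Bach maps from the continuity of $\nabla: L^q\to W^{-1,q}$ and $\nabla: W^{-1,q}\to W^{-2,q}$ via the products $W^{1,q}\otimes L^q\hookrightarrow W^{-1,q}$ and $W^{1,q}\otimes W^{-1,q}\hookrightarrow W^{-2,q}$. You also make explicit two steps the paper leaves implicit: the zeroth-order term $\R^{pq}\W_{qipj}\in L^{q/2}\hookrightarrow W^{-2,q}$, and the smooth-approximation argument passing the Bianchi and Ricci identities to the limit.
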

\begin{proof}
    Note that due to $q > \tfrac{n}{2}$, we have that $g \in L^\infty(M)$ and $\Gamma \in W^{1,q}(M)$. The first claim then follows from the form
    \begin{equation*}
        \Rm = \partial\Gamma + \Gamma * \Gamma
    \end{equation*}
    of the Riemannian curvature and the Sobolev multiplication property $W_{loc}^{1,q} \otimes W_{loc}^{1,q} \hookrightarrow L_{loc}^q$. Now, note that due to the Sobolev multiplications $W_{loc}^{1,q} \otimes L_{loc}^q \hookrightarrow W_{loc}^{-1,q}$ and $W_{loc}^{1,q} \otimes W_{loc}^{-1,q} \hookrightarrow W_{loc}^{-2,q}$ for $q > \max\{2,\tfrac{n}{2}\}$, the covariant derivative
    \begin{equation*}
        \nabla \mathbf{u} = \partial \mathbf{u} + \Gamma * \mathbf{u}
    \end{equation*}
    extends to continuous maps
    \begin{equation}
        \label{eq: nabla continuous}
        \nabla : L^q(M) \to W^{-1,q}(M) \,, \qquad \nabla : W^{-1,q}(M) \to W^{-2,q}(M) \,.
    \end{equation}
    In view of the definition of the Cotton and Bach tensors, as well as the Bianchi and Ricci identities, the rest of the assertions follow. 
\end{proof}

\subsection{The Laplacian for metrics of Sobolev class}
\label{The Laplace--Beltrami operator with Sobolev coefficients}

Let $M$ be a smooth compact manifold of dimension $n \geq 3$. Given a Riemannian metric $g \in W^{k,q}(M)$ with $k \geq 2$ and $q > \tfrac{n}{2}$, we consider its Laplacian $\Delta_g \coloneq \Div_g\nabla = \operatorname{tr}_g \nabla^2$ acting on arbitrary tensor fields. On scalar functions it is locally given by
\begin{equation*}
    \Delta_gu = \frac{1}{\sqrt{\mathrm{det}\,g}}\,\partial_{i}\Bigl(g^{ij}\sqrt{\mathrm{det}\,g}\,\partial_{j}u\Bigr) = g^{ij}\Bigl(\partial_i\partial_ju - \Gamma^k_{ij} \partial_ku\Bigr) \,.
\end{equation*}
Using the Sobolev multiplication properties and smooth approximations, it is not hard to see that it extends from $\Delta_g : C^{\infty}(M) \to W^{k-1,q}(M)$ to a bounded linear operator
\begin{equation*}
    \Delta_g : W^{k,p}(M) \to W^{k-2,p}(M)
\end{equation*}
for all $1 < p \leq q$, which is Fredholm of index zero and satisfies
\begin{equation*}
    \int_M \langle\Delta_g\mathbf{u}, \mathbf{v} \rangle_g \, d\mu_g = \int_M \langle \mathbf{u}, \Delta_g\mathbf{v} \rangle_g \, d\mu_g
\end{equation*}
for every tensor field $\mathbf{u} \in W^{2,p}(M)$ and $\mathbf{v} \in W^{2,p'}(M)$ of same rank. Indeed, it extends further by duality to a bounded linear operator
\begin{equation*}
    \Delta_g : L^p(M) \to W^{-2,p}(M)
\end{equation*}
for all $p \geq q'$, see \cite[Section 3.1]{AvalosCogoRoyo1}.

We will use the following regularity properties for the Laplacian of metrics of Sobolev class, applied to any tensor field $\mathbf{u}$. These are special cases of \cite[Proposition 5.4]{TaylorBochner} or \cite[Theorem 2.21]{HMT2023}:
\begin{theorem}
    [\textbf{Interior regularity}]
    \label{thm: Laplace interior elliptic regularity}
    Let $\Omega \subset \nR^n$ with $n \geq 3$ be an open bounded domain with smooth boundary and consider a Riemannian metric $g \in W^{k,q}_{loc}(\Omega)$ with $k \geq 2$ and $q > \tfrac{n}{2}$. 
    \begin{enumerate}
        \item[$(i)$] If $\mathbf{u} \in L^{q'}_{loc}(\Omega)$ and $\Delta_g u \in W^{-1,p}_{loc}(\Omega)$ for some $\tfrac{1}{q} - \tfrac{k-1}{n} \leq \tfrac{1}{p} \leq \tfrac{1}{q'} + \tfrac{k-1}{n}$, then $\mathbf{u} \in W^{1,p}_{loc}(\Omega)$.

        \item[$(ii)$] If $\mathbf{u} \in L^{q'}_{loc}(\Omega)$ and $\Delta_g \mathbf{u} \in W^{k-2,p}_{loc}(\Omega)$ for some $1 < p \leq q$, then $\mathbf{u} \in W^{k,p}_{loc}(\Omega)$.
    \end{enumerate}
\end{theorem}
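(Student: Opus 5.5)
We would deduce both parts from the standard interior $L^p$ theory for divergence-form and nondivergence-form operators with rough coefficients, combined with a bootstrap in the Sobolev scale. Throughout, we localize. We fix cutoffs $\chi \in C^\infty_c(\Omega)$ and study $\chi\mathbf{u}$. In local coordinates the tensor Laplacian has the form
\begin{equation*}
    \Delta_g\mathbf{u} = g^{ij}\partial_i\partial_j\mathbf{u} + \Gamma*\partial\mathbf{u} + (\partial\Gamma + \Gamma*\Gamma)*\mathbf{u},
\end{equation*}
with $g^{ij} \in W^{k,q}_{loc} \subset C^0$ because $q > n/2$, and with $\Gamma \in W^{k-1,q}_{loc}$. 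Equivalently, we may write it in divergence form as $\partial_i(a^{ij}\partial_j\mathbf{u}) + \text{l.o.t.}$, where $a^{ij} = \sqrt{\det g}\,g^{ij}$ is continuous. Then
\begin{equation*}
    \Delta_g(\chi\mathbf{u}) = \chi\Delta_g\mathbf{u} + 2\langle d\chi, \nabla\mathbf{u}\rangle + (\Delta\chi)\mathbf{u}.
\end{equation*}
The commutator terms are one derivative better than $\mathbf{u}$. This is what drives the bootstrap.

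For $(i)$, we would first establish the a priori estimate
\begin{equation*}
    \|\mathbf{v}\|_{W^{1,p}} \lesssim \|\Delta_g \mathbf{v}\|_{W^{-1,p}} + \|\mathbf{v}\|_{W^{-1,p}}
\end{equation*}
for compactly supported $\mathbf{v}$. We get it by freezing the continuous leading coefficients $a^{ij}$ on small balls and applying Calderón--Zygmund for the constant-coefficient operator $\partial_i(a^{ij}(x_0)\partial_j\cdot)$. The oscillation of $a^{ij}$ is absorbed, and a partition of unity patches the balls together. The lower-order terms $\Gamma*\partial\mathbf{v}$ and $\partial\Gamma*\mathbf{v}$ must map $W^{1,p}\to W^{-1,p}$ compactly or with small norm. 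By Sobolev multiplication with $\Gamma \in W^{k-1,q}$, this is exactly where the window
\begin{equation*}
    \tfrac1q - \tfrac{k-1}{n} \le \tfrac1p \le \tfrac1{q'} + \tfrac{k-1}{n}
\end{equation*}
enters. The upper bound is the dual of the lower one, obtained by applying the same multiplication argument to the adjoint.

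To pass from the estimate to regularity, starting from $\mathbf{u} \in L^{q'}$, we would use a duality and uniqueness argument. By the estimate and Fredholm theory, we solve $\Delta_g\mathbf{w} - \lambda \mathbf{w} = \Delta_g(\chi\mathbf{u}) - \lambda\chi\mathbf{u}$ with $\mathbf{w} \in W^{1,p}$ on a ball with Dirichlet condition. We then show $\mathbf{w} = \chi\mathbf{u}$ by testing against solutions of the adjoint problem, which lie in the dual range $W^{1,p'}$; this is possible precisely because the $p$-window is symmetric under duality. When the target $p$ is far from $q'$, we iterate, gaining one Sobolev step $\tfrac1{p_{j+1}} = \tfrac1{p_j} - \tfrac1n$ from the commutator terms at each stage.

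For $(ii)$, having $\mathbf{u} \in W^{1,p}_{loc}$ from $(i)$, we would difference-quotient, or equivalently differentiate the equation. The derivative $\partial\mathbf{u}$ satisfies
\begin{equation*}
    \Delta_g \partial\mathbf{u} = \partial(\Delta_g\mathbf{u}) + \partial g * \partial^2\mathbf{u} + \partial(\Gamma*\partial \mathbf{u}) + \partial\bigl((\partial\Gamma)*\mathbf{u}\bigr).
\end{equation*}
We then apply $(i)$ and the nondivergence $W^{2,p}$ estimate, which is valid since $g^{ij}$ is continuous. By induction, each step gains one derivative up to order $k$. At step $m$ we need $\partial^{m} g * \partial^{\cdot}\mathbf{u} \in W^{k-2-m,p}$. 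Given $g \in W^{k,q}$ with $q > n/2$ and $p \le q$, this follows from the Sobolev multiplication $W^{k-1,q}\otimes W^{k-1,p}\hookrightarrow W^{k-2,p}$, which holds since $q>n/2$. The restriction $p \le q$ is exactly what keeps the curvature term $\partial\Gamma \in W^{k-2,q}$ from limiting the regularity.

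The main obstacle is the low-regularity starting point in $(i)$. With $\mathbf{u}$ only in $L^{q'}$, the products $\partial\Gamma * \mathbf{u}$ are merely distributions, and justifying that the very weak solution coincides with the $W^{1,p}$ solution requires the duality argument above with sharp multiplication exponents. In practice we would rely on the cited results (Taylor's Proposition 5.4, or Theorem 2.21 of Hofmann--Mitrea--Taylor), of which this statement is a special case, and verify only that the exponent ranges match.
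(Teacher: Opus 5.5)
Your proposal lands where the paper does: the paper gives no argument of its own and simply records the statement as a special case of Taylor's Proposition~5.4 or Hofmann--Mitrea--Taylor's Theorem~2.21, so relying on those results after checking the exponent ranges is the same approach. The freezing/duality/bootstrap outline you add is only heuristic and would not go through verbatim, which is why the citation has to carry the proof: for example, in $(ii)$ with $k=2$ the first differentiated equation contains $\partial g * \partial^2\mathbf{u}$ with $\partial^2\mathbf{u}\in W^{-1,p}$, and $W^{1,q}\cdot W^{-1,p}\not\hookrightarrow W^{-1,p}$ when $q<n$, so one needs an iteration in the integrability exponent, as in the paper's proof of \cref{thm: regularity metric struictures}.
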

Using a standard patching argument, one obtains the following. 
\begin{theorem}
    [\textbf{Global regularity}]
    \label{thm: Laplace global elliptic regularity}
    Let $M$ be a smooth compact manifold of dimension $n \geq 3$ and consider a Riemannian metric $g \in W^{k,q}(M)$ with $k \geq 2$ and $q > \tfrac{n}{2}$.
    \begin{itemize}
        \item[$(i)$] If $\mathbf{u} \in L^{q'}(M)$ and $\Delta_g \mathbf{u} \in W^{-1,p}(M)$ for some $\tfrac{1}{q} - \tfrac{k-1}{n} \leq \tfrac{1}{p} \leq \tfrac{1}{q'} + \tfrac{k-1}{n}$, then $\mathbf{u} \in W^{1,p}(M)$.

        \item[$(ii)$] If $\mathbf{u} \in L^{q'}(M)$ and $\Delta_g \mathbf{u} \in W^{k-2,p}(M)$ for some $1 < p \leq q$, then $\mathbf{u} \in W^{k,p}(M)$.
    \end{itemize}
\end{theorem}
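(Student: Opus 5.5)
The plan is to deduce the global statement from the interior regularity \cref{thm: Laplace interior elliptic regularity} by a standard partition-of-unity patching, being careful that the commutator terms produced by the cutoffs lie in the right spaces so that one can bootstrap.

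\emph{Localization.} Cover $M$ by finitely many coordinate charts $\varphi_\alpha : U_\alpha \to \Omega_\alpha \subset \nR^n$, where each $\Omega_\alpha$ is a bounded domain with smooth boundary. The pushed-forward metric $(\varphi_\alpha^{-1})^*g$ lies in $W^{k,q}_{loc}(\Omega_\alpha)$. A tensor field $\mathbf{u}$ on $M$ is represented in each chart by a finite collection of component functions. Membership of $\mathbf{u}$ in $L^{q'}(M)$, $W^{-1,p}(M)$ or $W^{k,p}(M)$ is, by definition of the Sobolev spaces on compact manifolds, equivalent to membership of these components in the corresponding $loc$ spaces of $\Omega_\alpha$. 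More precisely, we use the restriction to relatively compact subsets together with a finite subcover.

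\emph{Apply interior regularity.} Choose relatively compact open sets $V_\alpha \Subset U_\alpha$ that still cover $M$. Restricting the hypothesis $\Delta_g \mathbf{u} \in W^{-1,p}(M)$ (resp.\ $W^{k-2,p}(M)$) to $U_\alpha$ gives $\Delta_g \mathbf{u} \in W^{-1,p}_{loc}(\Omega_\alpha)$ (resp.\ $W^{k-2,p}_{loc}(\Omega_\alpha)$) in coordinates. Likewise, $\mathbf{u} \in L^{q'}_{loc}(\Omega_\alpha)$. Here we use that restriction of distributions in negative Sobolev spaces to open subsets is continuous, since pairing with test functions supported in $U_\alpha$ is allowed. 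Interior regularity then gives $\mathbf{u} \in W^{1,p}_{loc}$ (resp.\ $W^{k,p}_{loc}$) on each $\Omega_\alpha$, hence on $\overline{V_\alpha}$.

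\emph{Globalize.} Take a partition of unity $\{\chi_\alpha\}$ subordinate to $\{V_\alpha\}$. Then $\chi_\alpha \mathbf{u}$ lies in $W^{1,p}$ (resp.\ $W^{k,p}$) with compact support in the chart. Summing over $\alpha$ gives $\mathbf{u} = \sum_\alpha \chi_\alpha \mathbf{u} \in W^{1,p}(M)$ (resp.\ $W^{k,p}(M)$), because multiplication by smooth cutoffs preserves these spaces. Since interior regularity is stated for $loc$ spaces, no commutator bootstrap is actually needed here: the cutoffs act only after regularity has been obtained.

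\emph{Main obstacle.} The only delicate point is the equivalence between the global negative-order spaces $W^{-1,p}(M)$, $W^{k-2,p}(M)$ and their local coordinate counterparts. This is needed so that the hypothesis transfers to each chart. For tensor fields it is also needed that the Laplacian in coordinates is a system whose principal part is $g^{ij}\partial_i\partial_j$ acting diagonally, with lower-order terms involving $\Gamma$ and $\partial\Gamma$. That is precisely the setting covered by the cited local result. We would invoke the definitions in \cite{avalos2024sobolev} for the former and accept the tensor version of \cref{thm: Laplace interior elliptic regularity} as stated.
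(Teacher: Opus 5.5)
Your proposal is correct and takes the same route as the paper, which obtains this result from \cref{thm: Laplace interior elliptic regularity} ``using a standard patching argument'' and gives no further detail. Your argument supplies exactly that patching: you localize the hypotheses to finitely many smooth charts, apply the interior result, and reassemble with a smooth partition of unity. You also rightly observe that no commutator bootstrap is needed, since the interior result already gives $W^{1,p}_{loc}$, respectively $W^{k,p}_{loc}$, regularity on each chart.
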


\subsection{Diffeomorphisms of Sobolev class}
\label{Diffeomorphisms of Sobolev class}
In order to relate the regularity of geometric objects with respect to different coordinate systems which are not necessarily $C^\infty$ compatible, we will need the following result from \cite[Lemma A.2 and Lemma A.3]{AvalosCogoRoyo1}. This is an extension of \cite[Theorem 3.41]{AdamsFournierBook}. We adopt the convention that $\phi$ is $W^{k,q}$ diffeomorphism if $\phi$ is a $C^1$ diffeomorphism and $\phi, \phi^{-1}$ are in $W^{k,q}_{loc}$.
\begin{lemma}
    \label{lemma_Adams}
    Fix integer $k \geq 2$ and real numbers $q > \tfrac{n}{2}$ and $1 \leq p \leq q$. Let $\phi: \Omega \to \Omega'$ be a $W^{k+1,q}$ diffeomorphism between bounded open sets in $\nR^n$.
    \begin{enumerate}
        \item [$(i)$] If $u \in W^{l,p}_{loc}(\Omega)$ with $l \leq k+1$ is a scalar field, then $u \circ \phi^{-1} \in W^{l,p}_{loc}(\Omega')$ and for every smooth open set $K \subset\subset \Omega$, there exists a constant $C=C(n,p,q,k,l,K,\phi)$ such that
        \begin{equation*}
            \|u \circ \phi^{-1}\|_{W^{l,p}(\phi(K))} \leq C\|u\|_{W^{l,p}(K)} \,.
        \end{equation*}
         
        \item [$(ii)$] If $\mathbf{u} \in W^{l,p}_{loc}(\Omega)$ with $l \leq 
        k$ is a tensor field, then $\phi_*\mathbf{u} \in W^{l,p}_{loc}(\Omega')$ and for every smooth open set $K \subset\subset \Omega$, there exists a constant $C=C(n,p,q,k,l,K,\phi)$ such that
        \begin{equation*}
            \|\phi_*\mathbf{u}\|_{W^{l,p}(\phi(K))} \leq C\|\mathbf{u}\|_{W^{l,p}(K)} \,.
        \end{equation*}
     \end{enumerate}
\end{lemma}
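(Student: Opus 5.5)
The plan is to reduce both claims to the chain rule and the Sobolev multiplication properties, inducting on the number of derivatives. Since $\phi$ is a $W^{k+1,q}$ diffeomorphism with $q>\tfrac n2$ and $k\ge 2$, the Sobolev embedding gives $D\phi,\,D\phi^{-1}\in W^{k,q}_{loc}\hookrightarrow C^0$. On $\overline K$ the Jacobian determinant is therefore bounded above and away from zero, and $D\phi^{-1}\circ\phi = (D\phi)^{-1}$ lies in $W^{k,q}_{loc}$, because $W^{k,q}_{loc}$ is an algebra for $kq>n$ and is closed under inversion of nonvanishing functions. Throughout, $K$ can be replaced by a slightly larger smooth $K'\subset\subset\Omega$ where needed.

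For $(i)$, I would first treat smooth $u$ and then pass to the limit by density, using the estimate. The case $l=0$ is the change of variables formula together with the Jacobian bound. For $l\ge1$, the chain rule gives
\[
\partial_j(u\circ\phi^{-1}) = \bigl((\partial_i u)\,(\partial_j(\phi^{-1})^i\circ\phi)\bigr)\circ\phi^{-1} .
\]
I would then argue by induction on $l$: assume the $(l-1)$ statement holds for all exponents $p\le q$. The product $(\partial_i u)\,a$, with $\partial_i u\in W^{l-1,p}$ and $a=\partial_j(\phi^{-1})\circ\phi\in W^{k,q}$, lies in $W^{l-1,p}$ provided $l-1\le k$. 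This uses the multiplication $W^{k,q}\otimes W^{m,p}\hookrightarrow W^{m,p}$ for $0\le m\le k$, $p\le q$, $q>\tfrac n2$. The induction hypothesis then gives $\partial_j(u\circ\phi^{-1})\in W^{l-1,p}$. This yields $l\le k+1$.

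The multiplication property at the borderline is where I expect the main obstacle. When $q$ is only slightly above $\tfrac n2$ and $m=k$, the top term pairs $D^k a\in L^q$ with $u\in W^{k,p}\hookrightarrow L^\infty$ only when $kp>n$. In general one must check the Hölder/Sobolev exponents. Take a term $D^\alpha a\cdot D^\beta u$ with $|\alpha|+|\beta|=m$. Then $D^\alpha a\in W^{k-|\alpha|,q}$ and $D^\beta u\in W^{m-|\beta|... }$; more precisely $D^\beta u\in W^{m-|\beta|,p}$, and one needs $\tfrac1p \ge \tfrac1q-\tfrac{k-|\alpha|}{n}+\tfrac1p-\tfrac{|\alpha|}{n}$ up to the usual endpoint cases. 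This reduces to $\tfrac1q<\tfrac kn$, which holds since $q>\tfrac n2$ and $k\ge2$. I would verify this carefully via the standard multiplication lemma, citing \cite{avalosCogoRoyo1}-type results, and handle the $l=0$ and $p\le q$ constraints separately.

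For $(ii)$, I would write the pushforward in coordinates. For a $(r,s)$-tensor, $\phi_*\mathbf u$ has components that are sums of products of $\mathbf u\circ\phi^{-1}$ with factors $D\phi\circ\phi^{-1}$ and $D\phi^{-1}$. By $(i)$ applied with $l=k$, the factor $D\phi\circ\phi^{-1}$ lies in $W^{k,q}$, and $D\phi^{-1}\in W^{k,q}$ directly. By $(i)$, $\mathbf u\circ\phi^{-1}\in W^{l,p}$. The same multiplication property then gives $\phi_*\mathbf u\in W^{l,p}$, but only for $l\le k$, since the Jacobian factors have just $k$ derivatives; this explains the loss of one derivative compared with $(i)$. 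The constants depend on $\|\phi^{\pm1}\|_{W^{k+1,q}}$ on neighbourhoods of $K$ and $\phi(K)$, as stated.
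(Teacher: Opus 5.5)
Your proof is correct. The paper gives no proof of its own here: it defers to \cite[Lemma A.2 and Lemma A.3]{AvalosCogoRoyo1} as an extension of \cite[Theorem 3.41]{AdamsFournierBook}, and your argument is that standard proof adapted to $W^{k+1,q}$ diffeomorphisms. It combines the first-order chain rule, induction on $l$, and the multiplication $W^{k,q}\cdot W^{m,p}\hookrightarrow W^{m,p}$ for $0\le m\le k$, $p\le q$, which does reduce to $\tfrac1q<\tfrac kn$ as you say. You also correctly trace the loss of one derivative in $(ii)$ to the Jacobian factors $D\phi\circ\phi^{-1}$ and $D\phi^{-1}$, which lie only in $W^{k,q}$.
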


\subsection{Harmonic coordinates and atlases}
\label{Harmonic coordinates and atlases}

The use of harmonic coordinates is going to be fundamental in this work. A coordinate system $\{x_i\}_{i=1}^n$ is called \emph{harmonic} if
\begin{equation}
    \label{conditions: harmonic_coords}
    \Delta_g x_i = 0 \qquad \text{for all } \; i = 1, \ldots, n \,.
\end{equation} 
For metrics of rough regularity, harmonic coordinates can be defined as long as \eqref{conditions: harmonic_coords} holds in a weak sense. In particular, they exists for metrics that are not necessarily $C^1$. A subtle drawback of harmonic coordinates, usually overlooked, is that their regularity depends on the regularity of the metric. This means that harmonic coordinates constructed using a non-smooth Riemannian metric are not smoothly related to the original differential structure of $M$. It will be important for us to keep track of such compatibility.
\begin{proposition}[\textbf{Harmonic coordinates}]
    \label{prop: Existence and Regularity of harmonic coordinates}
    Let $M$ be a smooth manifold of dimension $n\geq 3$ and consider a Riemannian metric $g \in W^{k,q}(M)$ with $k \geq 2$ and $q>\frac{n}{2}$. Then, there exists around each $p \in M$ a coordinate chart $(U,\varphi)$ with $\varphi(p) = 0$ satisfying
    \begin{enumerate}
        \item [$(i)$] $g_{ij}(0) = \delta_{ij}$
        \item [$(ii)$] $\Delta_g \varphi = 0$ in $U$
        \item[$(iii)$] $\varphi \in W^{k+1,q}(U,\nR^n)$
    \end{enumerate}
\end{proposition}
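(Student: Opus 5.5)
We prove the proposition by the classical construction: start from smooth coordinates, solve a Dirichlet problem for $\Delta_g$ on a small ball, and use the inverse function theorem. All analytic input comes from the regularity results for $\Delta_g$ stated above.

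\textbf{Step 1: smooth normalization.} Fix $p \in M$ and a smooth chart $(V,\psi)$ with $\psi(p)=0$. After composing with a constant linear map, we may assume $g_{ij}(0)=\delta_{ij}$. This makes sense pointwise because $g \in W^{2,q}_{loc} \hookrightarrow C^0$ for $q>\tfrac n2$. Work in a small ball $B_r=B_r(0)\subset\psi(V)$.

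\textbf{Step 2: the Dirichlet problem.} For each $i$, find $v_i \in W^{2,q}(B_r)\cap W^{1,2}_0(B_r)$ solving
\begin{equation*}
\Delta_g v_i = -\Delta_g x_i = g^{jk}\Gamma^i_{jk},
\end{equation*}
where $x_i$ are the coordinate functions. The right-hand side lies in $W^{1,q}_{loc}\subset L^q$. Existence and uniqueness of a weak solution follow from Lax--Milgram, since $\Delta_g$ is in divergence form with continuous coefficients. The $W^{2,q}$ estimate up to the boundary is a standard Calderón--Zygmund estimate for continuous coefficients. Set $\varphi_i = x_i + v_i$, so that $\Delta_g\varphi=0$.

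\textbf{Step 3: $\varphi$ is a chart.} We need $d\varphi(0)$ invertible and $\varphi$ a $C^1$ diffeomorphism near $0$. Since $W^{2,q}\hookrightarrow C^1$ only when $q>n$, and here only $q>\tfrac n2$, we first bootstrap.

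\emph{Bootstrap.} From $v_i\in W^{2,q}$ we get $\Gamma \in W^{k-1,q}$ and $v_i \in W^{2,q}$. Apply the interior regularity \cref{thm: Laplace interior elliptic regularity}(ii) to $\varphi_i$ itself, noting $\Delta_g\varphi_i=0$. This gives $\varphi_i\in W^{k,q}_{loc}$. When $k=2$, this does not yet give $C^1$.

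\emph{Improving integrability.} Use (i) with $p$ as large as allowed, together with Sobolev embedding. Since $g^{jk}\Gamma^i_{jk}\in W^{1,q}\hookrightarrow L^{q^*}$, iterating $L^p$-Calderón--Zygmund for $v_i$ (coefficients are continuous) gives $v_i\in W^{2,s}$ for every $s$ up to $\tfrac{nq}{n-q}$ (or all $s<\infty$ if $q\ge n$). Because $q>\tfrac n2$, we have $\tfrac{nq}{n-q}>n$, so $v_i\in C^{1,\alpha}$.

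\emph{Smallness of $\nabla v$.} By scaling, $\|\nabla v\|_{C^0(B_r)}\to0$ as $r\to0$. Concretely, rescale $x\mapsto rx$. The rescaled metric $g(r\cdot)$ converges to $\delta$ in $C^0$, and the forcing term becomes $r\,\Gamma(r\cdot)$, whose $L^{s}$ norm on $B_1$ is $r^{1-n/s}\|\Gamma\|_{L^s(B_r)}$. This tends to $0$ for some $s>n$ with $\Gamma\in L^s$, which holds since $\Gamma\in W^{1,q}\subset L^{q^*}$ with $q^*>n$. Uniform $W^{2,s}$ estimates on $B_1$ then give $|\nabla v_i|$ small. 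Hence $d\varphi=\Id+dv$ is close to the identity, and $\varphi$ is a $C^1$ diffeomorphism onto its image on a smaller ball by the inverse function theorem.

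\emph{Full regularity.} Apply \cref{thm: Laplace interior elliptic regularity}(ii) with top index, using $\Delta_g x_i\in W^{k-1,q}$. This yields $v_i\in W^{k+1,q}_{loc}$, because the equation for $v_i$ gains two derivatives over $\Gamma\in W^{k-1,q}$; the one-derivative gain beyond (ii) follows by differentiating the equation or from the cited general results. Therefore $\varphi\in W^{k+1,q}(U)$ after shrinking $U$.

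\textbf{Step 4: normalization at $0$.} Finally, replace $\varphi$ by $A(\varphi-\varphi(0))$ with $A$ a constant matrix chosen so that $g_{ij}(0)=\delta_{ij}$ in the new coordinates. Harmonicity is preserved since $A$ is affine.

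The main obstacle is Step 3: obtaining a genuine $C^1$ diffeomorphism when $q$ is only slightly above $\tfrac n2$. This requires the integrability bootstrap from $\Gamma\in L^{q^*}$, with $q^*>n$, together with the scaling smallness argument. A secondary obstacle is justifying the $W^{k+1,q}$ gain, which is one derivative more than \cref{thm: Laplace interior elliptic regularity}(ii) gives directly for $\Delta_g v_i\in W^{k-1,q}$.
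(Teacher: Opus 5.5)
Your construction is correct, and it is the standard one: solve a Dirichlet problem for $v=\varphi-x$ on a small ball, use the scaling $r\,\Gamma(r\,\cdot)$ together with $W^{2,s}$ bounds for some $s>n$ to make $d\varphi$ uniformly close to the identity, and then upgrade regularity. The paper gives no argument of its own for this proposition and simply defers to \cite[Proposition 3.11]{AvalosCogoRoyo1}, so there is no internal proof to compare against.

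The one place your write-up is imprecise is the final derivative. Part $(ii)$ of \cref{thm: Laplace interior elliptic regularity} cannot be applied ``with top index'', because $g$ is only $W^{k,q}$. Instead, differentiate $\Delta_g\varphi=0$ once in the smooth coordinates, which gives
\begin{equation*}
\Delta_g(\partial_s\varphi) = -\partial_s g^{ij}\,\partial_i\partial_j\varphi + \partial_s\bigl(g^{ij}\Gamma^m_{ij}\bigr)\,\partial_m\varphi \,,
\end{equation*}
and then apply $(ii)$ to $\partial_s\varphi$. For $k\geq 3$ the right-hand side lies in $W^{k-2,q}$ by Sobolev multiplication, since $q>\tfrac{n}{k-1}$.

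For $k=2$ and $q<n$, the first term lies in $L^{q^*}\cdot L^{s}\subset L^q$ only because you have already shown $\partial^2\varphi\in L^{s}$ with $s\geq n$; the case $q\geq n$ is handled the same way with $s$ large. The second term lies in $L^q$ because $\partial\varphi\in L^\infty$. So the integrability bootstrap you did for the $C^1$-smallness is exactly what makes this last step close. With that filled in, the argument is complete.
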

\begin{proof}
    See \cite[Proposition 3.11]{AvalosCogoRoyo1}.
\end{proof}
It is a simple consequence of 
\eqref{conditions: harmonic_coords} that $g^{ij}\Gamma^k_{ij} = 0$ holds in harmonic coordinates. In particular, the harmonic equation written in a harmonic coordinate chart only has top order coefficients. This implies that the compatibility between two harmonic charts is more regular than the compatibility with the original coordinates. The following result is an extension of \cite[Proposition 2.4]{AvalosCogoRoyo2} from $q > n$ to the case $q > \tfrac{n}{2}$.
\begin{proposition}[\textbf{Harmonic atlases}]
    \label{prop: harmonic atlas} 
    Let $M$ be a smooth compact manifold of dimension $n \geq 3$ and consider a Riemannian metric $g \in W^{k,q}(M)$ for $k \geq 2$ and $q > \tfrac{n}{2}$. Then, there exists a finite collection of charts $\mathcal A_H = \{(U_\alpha, \varphi_\alpha)\}_{\alpha \in \Lambda}$ covering $M$ with the following properties.
    \begin{enumerate}
        \item [$(i)$] Each chart satisfies $(i)$, $(ii)$ and $(iii)$ in \cref{prop: Existence and Regularity of harmonic coordinates}.

        \item[$(ii)$] $\mathcal A_H$ forms a $W^{k+2,q}$ atlas of $M$, i.e. 
        \begin{equation*}
            \varphi_\alpha \circ \varphi_\beta^{-1} \in W^{k+2,q}_{loc}
        \end{equation*}
        for each $\alpha,\beta \in \Lambda$. Moreover, if the metric components are in $W^{l,q}_{loc}$ in each chart of $\mathcal A_H$ for some $l \geq k$, then $\mathcal A_H$ is a $W^{l+2,q}$ atlas.
    \end{enumerate}
\end{proposition}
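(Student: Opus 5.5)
The plan is to cover $M$ by finitely many harmonic charts and then show that the transition maps are more regular than the charts themselves, using the harmonic equation written in harmonic coordinates.

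\emph{Step 1 (the cover).} By compactness and \cref{prop: Existence and Regularity of harmonic coordinates}, pick finitely many charts $(U_\alpha,\varphi_\alpha)$ covering $M$, each satisfying $(i)$--$(iii)$. Shrinking slightly, we may assume each $\varphi_\alpha$ is a $W^{k+1,q}$ diffeomorphism onto its image, defined on a neighbourhood of $\overline{U_\alpha}$. Since $q>\tfrac n2$ and $k\ge2$, we have $W^{k+1,q}\hookrightarrow C^1$, so these are $C^1$ diffeomorphisms. By \cref{lemma_Adams}$(ii)$, the components $g^{(\alpha)}_{ij}$ of $\varphi_{\alpha*}g$ lie in $W^{k,q}_{loc}(\varphi_\alpha(U_\alpha))$. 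The transition map $\psi=\varphi_\alpha\circ\varphi_\beta^{-1}$ is a composition of $W^{k+1,q}$ diffeomorphisms. By \cref{lemma_Adams}$(i)$ applied to components (with $l=k+1$), $\psi\in W^{k+1,q}_{loc}$ to begin with.

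\emph{Step 2 (gain of regularity).} Each component $\varphi_\alpha^m$ is $g$-harmonic on $U_\alpha\cap U_\beta$, so $\psi^m$ is harmonic for $\varphi_{\beta*}g$ on $\varphi_\beta(U_\alpha\cap U_\beta)$. In the coordinates $\varphi_\beta$ the metric $h=g^{(\beta)}\in W^{k,q}_{loc}$ satisfies $h^{ij}\Gamma^m_{ij}=0$, since these coordinates are themselves harmonic. Hence
\begin{equation*}
    0=\Delta_h\psi^m = h^{ij}\partial_i\partial_j\psi^m .
\end{equation*}
This is a non-divergence equation with only top-order coefficients $h^{ij}\in W^{k,q}_{loc}$. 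Writing $\Delta_h$ in divergence form gives $\partial_i(\sqrt{\det h}\,h^{ij}\partial_j\psi^m)=0$. Combining this with $h^{ij}\partial_i\partial_j\psi^m=0$ shows that the first-order term also vanishes, so the two forms are consistent.

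\emph{Step 3 (bootstrap, the main obstacle).} To gain regularity we differentiate the equation. The function $v=\partial_l\psi^m$ satisfies
\begin{equation*}
    h^{ij}\partial_i\partial_j v=-(\partial_l h^{ij})\,\partial_i\partial_j\psi^m ,
\end{equation*}
and $h^{ij}\partial_i\partial_j = \Delta_h$ when acting on any function in $\varphi_\beta$-coordinates. With $\partial h\in W^{k-1,q}$ and $\partial^2\psi\in W^{k-1,q}$, the Sobolev multiplication $W^{k-1,q}\otimes W^{k-1,q}\hookrightarrow W^{k-2,q}$ holds for $q>\tfrac n2$ and $k\ge2$; for $k=2$ this is $W^{1,q}\otimes W^{1,q}\hookrightarrow L^q$, which needs $q>\tfrac n2$. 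So the right-hand side lies in $W^{k-2,q}_{loc}$. Interior regularity, \cref{thm: Laplace interior elliptic regularity}$(ii)$ with $p=q$, then gives $v\in W^{k,q}_{loc}$, that is, $\psi\in W^{k+1,q}_{loc}$. This only reproduces the starting regularity, so the real work is to get the second derivative. The point is that the right-hand side contains only first derivatives of $h$ and no $\partial\Gamma$ term.

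To make this work, I would instead differentiate twice. For $w=\partial_l\partial_r\psi^m$,
\begin{equation*}
    \Delta_h w = -\partial_l\partial_r h^{ij}\,\partial_{ij}\psi-2\,\partial_{(l}h^{ij}\,\partial_{r)}\partial_{ij}\psi ,
\end{equation*}
where $\partial^2h\in W^{k-2,q}$ and $\partial^3\psi\in W^{k-2,q}$. The products are controlled via the multiplications $W^{k-2,q}\otimes W^{k-1,q}\hookrightarrow W^{k-2,q}$, and the right-hand side lies in $W^{k-2,q}$ whenever $k\ge3$. For $k=2$ it lies only in $L^{q/2}$-type spaces, so for that case I would use the $W^{-1,p}$ version, \cref{thm: Laplace interior elliptic regularity}$(i)$, writing the right-hand side in divergence form, $\partial(\partial h\,\partial^2\psi)$ plus lower-order terms. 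Then \cref{thm: Laplace interior elliptic regularity}$(ii)$ (or $(i)$ when $k=2$) gives $w\in W^{k,q}_{loc}$, hence $\psi\in W^{k+2,q}_{loc}$. Carefully verifying these multiplications and exponent ranges at the endpoint $k=2$, $q$ close to $\tfrac n2$, is where I expect the difficulty.

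\emph{Step 4 (the ``moreover'' part).} If $h\in W^{l,q}_{loc}$ in every chart with $l\ge k$, I would iterate Step 3. Each differentiation gains one derivative until $\psi\in W^{l+2,q}_{loc}$, with $h$ providing at most $l$ derivatives, which limits the gain to exactly $l+2$.
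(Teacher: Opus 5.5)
\textbf{The gap is the case $k=2$.} For $k\ge 3$ your argument coincides with the paper's. You differentiate $h^{ij}\partial_i\partial_j\psi^m=0$ twice, use $W^{k-2,q}\otimes W^{k-1,q}\hookrightarrow W^{k-2,q}$, and apply part $(ii)$ of \cref{thm: Laplace interior elliptic regularity}. The case $k=2$ is included in the statement, and it is where essentially all of the paper's work goes.

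Your plan there is to rewrite the right-hand side in divergence form and invoke part $(i)$. This cannot give $w\in W^{2,q}_{loc}$. Part $(i)$ only ever concludes $w\in W^{1,p}_{loc}$, i.e.\ $\psi\in W^{3,p}_{loc}$, which gains no derivative over the $\psi\in W^{3,q}_{loc}$ you start with. A second derivative on $w$ can only come from part $(ii)$. That part requires $\Delta_h w\in L^p_{loc}$ for some $1<p\le q$, and for $k=2$, $q\le n$ the right-hand side is not a priori in $L^q_{loc}$.

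\textbf{The missing idea: apply part $(ii)$ with an exponent $p<q$, then bootstrap the integrability.} For $k=2$ the right-hand side is a sum of products in $L^q\cdot W^{1,q}$. This is better than $L^{q/2}$, whose exponent need not even exceed $1$ when $n=3$.
\begin{itemize}
\item If $q>n$, the products are already in $L^q$ and one application of $(ii)$ suffices.
\item If $\tfrac n2<q<n$, they lie in $L^{p_0}$ with $\tfrac1{p_0}=\tfrac2q-\tfrac1n$. Here $1<p_0<q$, the lower bound because $q>\tfrac n2\ge\tfrac{2n}{n+1}$. So $(ii)$ gives $w\in W^{2,p_0}_{loc}$.
\item Feed this back in. The products $L^q\cdot W^{2,p_0}$ and $W^{1,q}\cdot W^{1,p_0}$ lie in $L^{p_1}$ with $\tfrac1{p_1}=\tfrac1{p_0}+\tfrac1q-\tfrac2n$. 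This is a fixed strict gain because $q>\tfrac n2$.
\item After finitely many steps $p_m>\tfrac n2$. The products then lie in $L^q$, and $(ii)$ yields $w\in W^{2,q}_{loc}$, hence $\psi\in W^{4,q}_{loc}$.
\end{itemize}
The borderline cases $q=n$ and $p_m=\tfrac n2$ need slightly different embeddings, e.g.\ $L^n\otimes W^{1,n}\hookrightarrow L^{3n/4}$.

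Alternatively, your divergence-form idea can serve as a first stage. Iterating part $(i)$ raises $\partial^3\psi$ to $L^{q^*}$, with $\tfrac1{q^*}=\tfrac1q-\tfrac1n$. Then $\partial^2\psi\in W^{1,q^*}\hookrightarrow L^\infty$, both products land in $L^q$, and $(ii)$ finishes. Either way, an integrability bootstrap is indispensable at $k=2$, and your proposal contains none. Your Step 4 is fine once this is in place.
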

\begin{proof}
     By \cref{prop: Existence and Regularity of harmonic coordinates}, there exists a a harmonic coordinate chart around each point in $M$. Since $M$ is compact, it can be covered by finitely many of them $\mathcal A_H = \{(U_\alpha, \varphi_\alpha)\}_{\alpha \in \Lambda}$, which proves part $(i)$. 

    In order to prove $(ii)$, we need to examine the regularity of the transition maps between any two harmonic charts. Fix two arbitrary charts $(U_1, \varphi_1)$ and $(U_2, \varphi_2)$ in $\mathcal A_H$ with nonempty intersection and let $\{x_i\}_{i=1}^n$ and $\{y_i\}_{i=1}^n$ be the coordinates induced by $\varphi_1$ and $\varphi_2$, respectively.  
    By part $(iii)$ of \cref{prop: Existence and Regularity of harmonic coordinates} and part $(i)$ of \cref{lemma_Adams}, we directly have that $x_i(y) \in W^{k+1,q}_{loc}(\varphi_2(V_1 \cap V_2))$.
    Writing the harmonic equation satisfied by $\{x_i\}_{i=1}^n$ in terms of the harmonic coordinates $\{y_i\}_{i=1}^n$, it reads
    \begin{equation}
        \label{eq: Harmoniceq}
        \Delta_g x_i(y) = g^{pq}(y) \frac{\partial^2x_i}{\partial y_p \partial y_q}(y) = 0 \qquad \text{in} \quad \varphi_2(V_1 \cap V_2)
    \end{equation}
    for each $i = 1, \ldots, n$.  Differentiating twice in directions $y_r$ and $y_s$, we obtain
    \begin{equation}
        \label{eq: DerivativeHarmonic}
        \begin{split}
            g^{pq} \frac{\partial^2}{\partial y_p \partial y_q}&\left(\frac{\partial^2 x_i}{\partial y_r\partial y_s}\right) 
            \\
            &= - \frac{\partial^2 g^{pq}}{\partial y_r\partial y_s} \frac{\partial^2 x_i}{\partial y_p \partial y_q} - \frac{\partial g^{pq}}{\partial y_r} \frac{\partial^3 x_i}{\partial y_p \partial y_q \partial y_s} - \frac{\partial g^{pq}}{\partial y_s} \frac{\partial^3 x_i}{\partial y_p \partial y_q \partial y_r} \,.
        \end{split}
    \end{equation}
    Observe that due to $q > \tfrac{n}{2}$, we have that
    \begin{equation*}
        \frac{\partial^2 x_i}{\partial y_r\partial y_s} \in W^{k-1,q}_{loc} \hookrightarrow L^{q'}_{loc} \,,
    \end{equation*}
    whereas the right-hand side of \eqref{eq: DerivativeHarmonic} is in $W^{k-2,q}_{loc}(\varphi_2(V_1 \cap V_2)) \otimes W^{k-1,q}_{loc}(\varphi_2(V_1 \cap V_2))$. If $k \geq 3$, then
    \begin{align}
        \label{embedding k-2,q}
        W^{k-2,q}_{loc} \otimes W^{k-1,q}_{loc} &\hookrightarrow W^{k-2,q}_{loc}
    \end{align}
    holds for every $q > \tfrac{n}{2}$ and part $(ii)$ of \cref{thm: Laplace interior elliptic regularity} gives that $\partial^2x_i(y) \in W^{k,q}_{loc}(\varphi_2(V_1 \cap V_2))$, which in turn, implies that $x_i(y) \in W^{k+2,q}_{loc}(\varphi_2(V_1 \cap V_2))$. If $k = 2$ and $ q = n$, we instead employ the Sobolev multiplication
    \begin{equation*}
        L^n_{loc} \otimes W^{1,n}_{loc} \hookrightarrow L^{\frac{3n}{4}}_{loc} \,,
    \end{equation*}
    to deduce, by part $(ii)$ of \cref{thm: Laplace interior elliptic regularity}, that $\partial^2x_i(y) \in W^{2,\frac{3n}{4}}_{loc}(\varphi_2(V_1 \cap V_2))$ and thus $x_i(y) \in W^{4,\frac{3n}{4}}_{loc}(\varphi_2(V_1 \cap V_2))$. This implies, by the Sobolev multiplications
    \begin{equation*}
        W^{1,n}_{loc} \otimes W^{1,\frac{3n}{4}}_{loc} \hookrightarrow L^n_{loc} \;, \quad L^{n}_{loc} \otimes W^{2,\frac{3n}{4}}_{loc} \hookrightarrow L^n_{loc}
    \end{equation*}
    that the right-hand side of \eqref{eq: DerivativeHarmonic} is in $L^q_{loc}(\varphi_2(V_1 \cap V_2))$
    and we conclude after appealing to \cref{thm: Laplace interior elliptic regularity}. If $k = 2$ and $q < n$, we can embed 
    \begin{equation*}
        W^{1,q}_{loc} \otimes L^q_{loc} \hookrightarrow L^{p_0}_{loc} \quad \text{with} \quad \frac{1}{p_0} \coloneqq \frac{2}{q} - \frac{1}{n}
    \end{equation*}
    and deduce from part $(ii)$ of \cref{thm: Laplace interior elliptic regularity} that $x_i(y) \in W^{4,p_0}_{loc}(\varphi_2(V_1 \cap V_2))$. If $q > \tfrac{2n}{3}$, then $p_0 > \tfrac{n}{2}$ and the Sobolev multiplications
    \begin{equation*}
        L^{q}_{loc} \otimes W^{2,p_0}_{loc} \hookrightarrow L^{q}_{loc} \,, \quad W^{1,q}_{loc} \otimes W^{1,p_0}_{loc} \hookrightarrow L^{q}_{loc}
    \end{equation*}
    hold, implying that the right-hand side of \eqref{eq: DerivativeHarmonic} is in $L^q_{loc}(\varphi_2(V_1 \cap V_2))$. If $q = \tfrac{2n}{3}$, that is $p_0 = \tfrac{n}{2}$, we have the embeddings
    \begin{equation*}
        L^{q}_{loc} \otimes W^{2,p_0}_{loc} \hookrightarrow L^{\frac{4q}{5}}_{loc} \,, \quad W^{1,q}_{loc} \otimes W^{1,p_0}_{loc} \hookrightarrow L^{\frac{4q}{5}}_{loc} \,,
    \end{equation*}
    implying that the right-hand side of \eqref{eq: DerivativeHarmonic} is in $L^{\frac{4q}{5}}_{loc}(\varphi_2(V_1 \cap V_2))$ and thus $x_i(y) \in W^{4,\frac{4q}{5}}_{loc}(\varphi_2(V_1 \cap V_2))$. In turn, we deduce from the Sobolev multiplications
    \begin{equation*}
        L^{q}_{loc} \otimes W^{2,\frac{4q}{5}}_{loc} \hookrightarrow L^{q}_{loc} \,, \quad W^{1,q}_{loc} \otimes W^{1,\frac{4q}{5}}_{loc} \hookrightarrow L^{q}_{loc}
    \end{equation*}
    that the right-hand side of \eqref{eq: DerivativeHarmonic} is in $L^q_{loc}(\varphi_2(V_1 \cap V_2))$ as before. Finally, if $q < \tfrac{2n}{3}$, we learn from the multiplication properties
    \begin{equation*}
        L^{q}_{loc} \otimes W^{2,p_0}_{loc} \hookrightarrow L^{p_1}_{loc} \,, \quad W^{1,q}_{loc} \otimes W^{1,p_0}_{loc} \hookrightarrow L^{p_1}_{loc} \quad \text{with} \quad \frac{1}{p_1} \coloneqq \frac{1}{p_0} + \frac{1}{q} - \frac{2}{n} 
    \end{equation*}
    that the right-hand side is in $L^{p_1}_{loc}(\varphi_2(V_1 \cap V_2))$ and appealing to \cref{thm: Laplace interior elliptic regularity} once more, we deduce that $x_i(y) \in W^{4,p_1}_{loc}(\varphi_2(V_1 \cap V_2))$. We remark that $p_0 < p_1 < q$ since $p_0 < \tfrac{n}{2} < q$. If $p_1 > \tfrac{n}{2}$, then the Sobolev multiplications
    \begin{equation*}
        L^{q}_{loc} \otimes W^{2,p_1}_{loc} \hookrightarrow L^{q}_{loc} \,, \quad W^{1,q}_{loc} \otimes W^{1,p_1}_{loc} \hookrightarrow L^{q}_{loc}
    \end{equation*}
    imply as before that the right-hand side of \eqref{eq: DerivativeHarmonic} is in $L^q_{loc}(\varphi_2(V_1 \cap V_2))$, and if not, we use
    \begin{equation*}
        L^{q}_{loc} \otimes W^{2,p_1}_{loc} \hookrightarrow L^{p_2}_{loc} \,, \quad W^{1,q}_{loc} \otimes W^{1,p_1}_{loc} \hookrightarrow L^{p_2}_{loc} \quad \text{with} \quad \frac{1}{p_2} \coloneqq \frac{1}{p_1} + \frac{1}{q} - \frac{2}{n}
    \end{equation*}
    together with part $(ii)$ of \cref{thm: Laplace interior elliptic regularity} grant $x_i(y) \in W^{4,p_2}_{loc}(\varphi_2(V_1 \cap V_2))$. Again, $p_1 < p_2 < q$ due to $p_1 < \tfrac{n}{2} < q$. We may iterate this process to construct a monotone sequence of positive numbers $p_1 < p_2 < p_3 < \ldots$ defined by
    \begin{equation*}
        \frac{1}{p_{m+1}} \coloneq \frac{1}{p_{m}} + \frac{1}{q} - \frac{2}{n} \,.
    \end{equation*}
    The sequence does not accumulate due to the hypothesis $q > \tfrac{n}{2}$, and $p_{m+1}$ is well-defined provided that $p_m < \tfrac{n}{2}$. In particular, it reaches $\tfrac{n}{2}$ in finitely many steps, and we fall into the previous cases. We conclude that the diffeomorphism $\varphi_1 \circ \varphi_2^{-1}$ is in $W^{k+2,q}_{loc}(\varphi_2(V_1 \cap V_2))$ regardless of $k$ and $q$.

    Finally, were the components $g^{pq}$ in \eqref{eq: Harmoniceq} in $W^{l,q}_{loc}(\varphi_2(V_1 \cap V_2))$ for $l > k$, iterating the above bootstrap argument enables to get $\varphi_1 \circ \varphi_2^{-1} \in W^{l+2}_{loc}(\varphi_2(V_1 \cap V_2))$.
\end{proof}

In particular, since harmonic coordinates of rough metrics are not $C^\infty$ compatible with the original differential structure of $M$, the components of smooth tensor fields may not be $C^\infty$ in harmonic coordinates. Instead, their regularity will depend on the regularity of harmonic coordinates and, thereby, the Riemannian metric. 
\begin{corollary}
    \label{prop: Adams Lemma}    
    Let $M$ be a smooth compact manifold of dimension $n \geq 3$ and consider a Riemannian metric $g \in W^{k,q}(M)$ for $k \geq 2$ and $q > \tfrac{n}{2}$. Let $\mathcal A_H$ be a harmonic atlas as in \cref{prop: harmonic atlas} and let $\mathbf{u}$ be a tensor field on $M$. Then, $\mathbf{u} \in W^{k,q}(M)$ if and only if the components of $\mathbf{u}$ are $W^{k,q}_{loc}$ in every harmonic chart of $\mathcal A_H$.
\end{corollary}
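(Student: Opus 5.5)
The plan is to reduce the statement to \cref{lemma_Adams} applied to the transition maps between the smooth atlas of $M$ and the harmonic atlas $\mathcal A_H$. By definition, $\mathbf{u} \in W^{k,q}(M)$ means that its components are $W^{k,q}_{loc}$ in the charts of a (finite) smooth atlas of $M$. Part $(iii)$ of \cref{prop: Existence and Regularity of harmonic coordinates} says that each harmonic chart $\varphi_\alpha$, viewed in smooth coordinates, is a $W^{k+1,q}$ map. So for any smooth chart $(V,\psi)$, the map $\phi = \varphi_\alpha \circ \psi^{-1}$ is $W^{k+1,q}_{loc}$ on $\psi(U_\alpha \cap V)$.

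First I check that $\phi$ is a $W^{k+1,q}$ diffeomorphism in the sense of the paper. Since $q > \tfrac n2$ and $k \geq 2$, we have $W^{k+1,q}_{loc} \hookrightarrow C^1$, so $\phi$ is a $C^1$ diffeomorphism after shrinking $U_\alpha$ if necessary; the harmonic charts are already diffeomorphisms onto their images. It remains to show $\phi^{-1} \in W^{k+1,q}_{loc}$. Its derivative is $D\phi^{-1} = (D\phi)^{-1} \circ \phi^{-1}$. Here $(D\phi)^{-1}$ is in $W^{k,q}_{loc}$ because matrix inversion is smooth and $W^{k,q}_{loc}$ is an algebra for $kq > n$. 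Composing with the $C^1$ map $\phi^{-1}$ preserves $W^{1,q}$, which gives $\phi^{-1} \in W^{2,q}_{loc}$. I then bootstrap in the Sobolev index, using the chain rule and \cref{lemma_Adams}$(i)$ at lower orders, until $\phi^{-1} \in W^{k+1,q}_{loc}$. This inverse-regularity step is the main technical point; it is exactly what \cite[Lemma A.2]{AvalosCogoRoyo1} handles, so I would cite it there.

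Given this, both directions follow from \cref{lemma_Adams}$(ii)$ with $l = k$ and $p = q$. If $\mathbf{u} \in W^{k,q}$ in smooth charts, its components in the chart $\varphi_\alpha$ are those of $\phi_*\mathbf{u}$, hence $W^{k,q}_{loc}$. Conversely, if the components are $W^{k,q}_{loc}$ in every harmonic chart, apply the lemma to $\phi^{-1}$, which is also a $W^{k+1,q}$ diffeomorphism. This gives $W^{k,q}_{loc}$ components on $\psi(U_\alpha \cap V)$ for every $\alpha$. Since the $U_\alpha$ cover $M$, a partition of unity subordinate to the smooth atlas, together with compactness, yields $\mathbf{u} \in W^{k,q}(M)$.

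The point to emphasise is that tensor components transform with one derivative of $\phi$. The Jacobian factors are therefore only $W^{k,q}$, which is why the statement is at level $W^{k,q}$ and not higher. The multiplication $W^{k,q}\otimes W^{k,q}\hookrightarrow W^{k,q}$, valid for $q>\tfrac n2$ and $k\ge2$, is exactly what is needed for this.
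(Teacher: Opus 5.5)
Your proposal is correct and follows the paper's route: the paper's proof is the one-line observation that harmonic charts are $W^{k+1,q}$ in smooth coordinates, so \cref{lemma_Adams}$(ii)$ with $l=k$, $p=q$ applies to the transition maps in both directions. Your additional check that the inverse transition maps are also $W^{k+1,q}$, via the Sobolev inverse-function result of \cite{AvalosCogoRoyo1}, makes explicit what the paper leaves implicit.
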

\begin{proof}
    This is an immediate consequence of the $W^{k+1,q}$ regularity of harmonic coordinates and \cref{lemma_Adams}.
\end{proof}

\vspace{0.5cm}
\section[{\textbf{Differential structures of sobolev class}}]{Differential structures of sobolev class}
\label{section: Sobolev version a theorem of Whitney}

A foundational theorem in differential geometry due to Hassler Whitney asserts that given a $C^k$ differentiable manifold $M$ with $k \geq 1$, there always exists a $C^\infty$ differential structure on $M$ which is $C^k$ compatible. Furthermore, it is unique up to $C^\infty$ diffeomorphisms. We refer the reader to \cite[Ch. 2, Theorem 2.9]{Hirsch} for a complete proof, which makes use of Whitney's weak and strong topologies. The goal of this section is to prove an analogue result for differential structures of Sobolev class. We use a more modern approach inspired by the celebrated Meyer--Serrin theorem \cite{MeyersSerrin}, which can be easily adapted to give a more direct proof of the classical theorem.

We recall that given a $W^{k,p}$ atlas $\mathcal A$ on a manifold $M$, there exists a unique maximal $W^{k,p}$ atlas $\mathcal D$ on $M$ such that $\mathcal A \subset \mathcal D$ consisting of the set of all charts in $M$ which are $W^{k,p}$ compatible with every chart in $\mathcal A$. Such maximal atlas is referred to as the $W^{k,p}$ \emph{differential structure} generated by $\mathcal A$.
\begin{theorem}
    \label{thm: Whitney Sobolev}
    Let $\mathcal A$ be a $W^{k,q}$ differential structure on an $n$-dimensional manifold $M$ with $k \geq 3$ and $q > \tfrac{n}{2}$. Then there exists a $C^{\infty}$ differential structure $\mathcal B \subset \mathcal A$. Moreover, if $\mathcal B' \subset \mathcal A$ is another $C^\infty$ differential structure on $M$, then $(M, \mathcal B)$ is $C^\infty$ diffeomorphic to $(M, \mathcal B')$.
\end{theorem}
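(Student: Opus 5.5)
The plan is to reduce the statement to the classical Whitney theorem for $C^1$ structures. The hypotheses $k \geq 3$ and $q > \tfrac{n}{2}$ give the Sobolev embedding $W^{k,q}_{loc} \hookrightarrow W^{3,q}_{loc} \hookrightarrow C^1_{loc}$. Indeed, $W^{2,q}_{loc} \hookrightarrow C^0_{loc}$ for $q > \tfrac n2$, so $W^{3,q}_{loc} \hookrightarrow C^1_{loc}$. Hence every $W^{k,q}$ atlas is a $C^1$ atlas, and $\mathcal A$ sits inside a $C^1$ differential structure $\mathcal A^1$. It also follows that $W^{k,q}$ diffeomorphisms are $C^1$ diffeomorphisms, and by \cref{lemma_Adams} they compose and invert within the class $W^{k,q}$, so "maximal atlas'' is well defined.

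\textbf{Existence.} I would build $\mathcal B$ directly, in the Meyers--Serrin spirit, rather than quoting Hirsch. Take a countable, locally finite cover of $M$ by charts $(U_\alpha, \varphi_\alpha) \in \mathcal A$, a shrinking $V_\alpha \Subset U_\alpha$, and a subordinate partition of unity $\{\psi_\alpha\}$ that is $W^{k,q}$ with respect to $\mathcal A$. Order the charts and modify them inductively. At step $m$, all transitions among the already modified charts $\tilde\varphi_1, \dots, \tilde\varphi_{m-1}$ are $C^\infty$ on the relevant shrunk sets. I then replace $\varphi_m$ on $V_m$ by $\tilde\varphi_m$, chosen as follows. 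On the overlap with earlier charts, take the map $\tilde\varphi_j \circ \varphi_m^{-1}$, mollify it, and glue the pieces with the partition of unity in the variables $\varphi_m$. Away from the overlap, keep $\varphi_m$ mollified.

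The mollification errors are taken small in $W^{k,q}$ on compact sets, hence small in $C^1$, so the glued map remains a $C^1$ (indeed $W^{k,q}$) diffeomorphism onto its image. Two properties need to hold:
\begin{enumerate}
    \item[(a)] $\tilde\varphi_m \circ \varphi_m^{-1} \in W^{k,q}_{loc}$, so $\tilde\varphi_m$ is $W^{k,q}$ compatible with $\mathcal A$ and stays in $\mathcal A$;
    \item[(b)] $\tilde\varphi_m \circ \tilde\varphi_j^{-1}$ is $C^\infty$ for $j < m$.
\end{enumerate}
For (b), I would arrange that on the overlap region $\tilde\varphi_m$ is a smooth function of $\tilde\varphi_j$. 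Concretely, define $\tilde\varphi_m$ there as $F \circ \tilde\varphi_j$ with $F$ a smooth approximation of $\varphi_m \circ \tilde\varphi_j^{-1}$. The pieces coming from different $j$ are already smoothly related to each other, so the gluing is by smooth functions in a common smooth coordinate.

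Local finiteness makes the induction stabilize on compact sets, and I would shrink domains by a summable amount at each stage. The smooth structure $\mathcal B$ generated by the $\tilde\varphi_\alpha$ is then contained in $\mathcal A$.

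\textbf{Uniqueness.} Here I would use the classical Whitney uniqueness. Both $\mathcal B$ and $\mathcal B'$ are $C^\infty$ structures contained in the same $C^1$ structure $\mathcal A^1$, so $\Id : (M,\mathcal B) \to (M,\mathcal B')$ is a $C^1$ diffeomorphism. By density of $C^\infty$ maps in the strong $C^1$ topology and openness of diffeomorphisms in it (\cite[Ch. 2]{Hirsch}), it can be approximated by a $C^\infty$ diffeomorphism. Alternatively, I would redo the mollify-and-glue argument on $\Id$, read in $\mathcal B$ and $\mathcal B'$ charts, with $C^1$-small errors; the smallness gives injectivity and nondegeneracy.

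\textbf{Main obstacle.} I expect the hardest part to be the inductive gluing in the existence step: keeping the modified chart a diffeomorphism while simultaneously ensuring smooth compatibility with all earlier charts and $W^{k,q}$ compatibility with $\mathcal A$. My approach is to make every perturbation $C^1$-small, which is available because $k \geq 3$ and $q > \tfrac n2$ give $W^{k,q} \hookrightarrow C^1$. $C^1$-small perturbations of a diffeomorphism on a compact set remain diffeomorphisms onto their image, and Sobolev regularity of compositions is controlled by \cref{lemma_Adams}.
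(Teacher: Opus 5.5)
Your uniqueness argument is the paper's own: the identity is a $C^1$ diffeomorphism because $W^{k,q}\hookrightarrow C^1$, and Hirsch's density theorem upgrades it to a $C^\infty$ one. For existence you take a genuinely different route, the classical Whitney--Munkres induction over a countable locally finite cover with shrinkings.

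The paper instead applies Zorn's lemma to pairs $(B,\mathcal B)$ and needs only one extension step, of Meyers--Serrin type. Given a chart $(U,\phi)\in\mathcal A$ with $\phi(U)=B_1(0)$ meeting $B$, it does the following:
\begin{itemize}
\item it splits $V=U\cap B$ into dyadic layers according to the distance to $\nR^n\setminus\phi(V)$, and covers each layer by finitely many $\mathcal B$-charts $\varphi_{m,i}$;
\item it replaces each $(\eta_{m,i}\phi)\circ\varphi_{m,i}^{-1}$ by a smooth $f^\varepsilon_{m,i}$ with $W^{k,q}$ error $\varepsilon/(2^mC_m)$, where $C_m$ comes from \cref{lemma_Adams};
\item it sets $\tilde\phi=\sum f^\varepsilon_{m,i}\circ\varphi_{m,i}$ on $V$ and $\tilde\phi=\phi$ on $U\setminus V$.
\end{itemize}
Because the errors decay towards the boundary of $V$, $\tilde\phi\circ\phi^{-1}-\Id$ is small in $W^{k,q}(B_1(0))$, hence in $C^1$. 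The new chart is then $C^\infty$-compatible with $\mathcal B$ on the entire overlap, with no cutoff and no shrinking.

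Your route needs only approximations on compact sets and no Zorn's lemma, at the price of bookkeeping the chart domains. The analytic ingredients are the same: $W^{k,q}\hookrightarrow C^1$; \cref{lemma_Adams} applied to $W^{k,q}$ transition maps, which is where $k\geq 3$ enters; and $C^1$-small perturbations of a chart on a ball. One correction: $W^{k,q}$ compatibility also requires the inverse of $\tilde\varphi_m\circ\varphi_m^{-1}$ to be $W^{k,q}$. This comes from a Sobolev inverse function theorem (the paper uses \cite[Theorem A.1]{AvalosCogoRoyo1}), not from \cref{lemma_Adams}.

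The step you flag as the main obstacle must be stated differently from your first recipe. Mollifying $\tilde\varphi_j\circ\varphi_m^{-1}$ and gluing in the $\varphi_m$ variables with the $W^{k,q}$ partition $\{\psi_\alpha\}$ does not give (b). It produces smoothness in the wrong variables, and gluing functions that are only $W^{k,q}$ destroy smoothness. Keep only the version in (b), glued with a partition of unity that is $C^\infty$ for the structure generated by $\tilde\varphi_1,\dots,\tilde\varphi_{m-1}$.

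Since $\varphi_m$ itself is not smooth for that structure, the transition to $\varphi_m$ cannot take place where the earlier charts are retained. The standard fix is:
\begin{itemize}
\item take $\chi$, smooth in $\varphi_m$-coordinates, compactly supported in the overlap and equal to $1$ on a neighbourhood $O$ of the compact set $\overline{V_m}\cap(\overline{V_1}\cup\dots\cup\overline{V_{m-1}})$;
\item choose $G$, smooth for the earlier structure and $W^{k,q}$-close to $\varphi_m$ on $\supp\chi$, and set $\tilde\varphi_m=\chi G+(1-\chi)\varphi_m$;
\item remove from the earlier domains a compact neighbourhood $\overline D\subset U_m$ of $\overline{V_m}\setminus O$ that is disjoint from $\overline{V_1}\cup\dots\cup\overline{V_{m-1}}$, and give $\tilde\varphi_m$ the domain $(D\cup O)\cap U_m$.
\end{itemize}
Old and new domains then meet only inside $O$, where $\tilde\varphi_m=G$. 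Coverage of $\overline{V_1}\cup\dots\cup\overline{V_m}$ is preserved. By local finiteness each domain is cut only finitely often, so no summable shrinking is needed. With this modification your induction goes through.
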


\begin{proof}
    Consider the set
    \begin{equation*}
        \mathscr{S} \coloneqq \Bigl\{ (B, \mathcal B) \,:\, B\subset M \text{ open}, \; \mathcal B \subset \mathcal A \text{ is a }C^{\infty}\text{ differential structure on } B \Bigr\}
    \end{equation*}
    together with the partial ordering $(B_1, \mathcal B_1) \leq (B_2, \mathcal B_2)$ whenever $B_1 \subseteq B_2$ and $\mathcal B_1 \subseteq \mathcal B_2$. Given a chain $(B_1, \mathcal B_1) \leq \cdots \leq (B_l, \mathcal B_l) \leq \cdots$, it is easy to see that 
    \begin{equation*}
        \left(\bigcup_{i = 1}^{\infty} B_i, \;\bigcup_{i = 1}^{\infty} \mathcal B_i\right) \in \mathscr S
    \end{equation*}
    is an upper bound. Moreover, $\mathscr S \neq \emptyset$, as any single chart of $\mathcal A$ defines an element of $\mathscr S$. Then, by the \textit{Zorn's lemma}, there exists a maximal element $(B, \mathcal B) \in \mathscr S$. If $B = M$ we are done, so let us assume for a contradiction that $B \neq M$. Then, there exists a chart $(W, \phi) \in \mathcal A$ such that $W \cap (M \setminus B) \neq \emptyset$ and $W \cap B \neq \emptyset$. By local compactness of manifolds, we may restrict to a precompact domain $U  \subset\subset W$ still satisfying $U \cap (M \setminus B) \neq \emptyset$ and $U \cap B \neq \emptyset$ and assume that $\phi(U) = B_1(0)$. Our goal is to construct a chart $(U,\widetilde{\phi}) \in \mathcal A$ that is $C^\infty$ compatible with $\mathcal B$ and contradict the maximality of $(B,\mathcal B)$. 
    
    Let $V \coloneq U \cap B$ and consider for each $m \in \nN$ the open set
    \begin{equation*}
        V_m \coloneqq \Big\{p \in V \; : \; 2^{-m-1} < \inf_{q \in \nR^n \setminus \phi(V)} |\phi(p) - q| < 2^{-m+1}\Bigr\} \,.
    \end{equation*}
    We remark that $V_m$ only intersects $V_{m-1}$ and $V_{m+1}$ for each $m \geq 2$. Note that closure $\overline{V_m}$ is contained in $\overline U$, which is compact, and therefore $\overline{V_m}$ is itself compact. Moreover, $\overline{V_m} \subset V \subset B$. Hence, there exists a finite collection of charts of $\mathcal B$ covering $\overline{V_m}$, which after restricting to $V_m$, we name $\big\{\big(U_{m, i}\,,\,\varphi_{m, i}\big)\big\}_{i = 1}^{\Lambda_m}$. Note that since $\Lambda_m \in \nN$ is finite for each $m$, we have a locally finite covering
    \begin{equation}
        \label{eq: loc fin cover}
        V = \bigcup_{m \in \nN} \,\bigcup_{i = 1}^{\Lambda_m} U_{m, i} \,. 
    \end{equation}
    We can thereby consider a smooth  partition of unity $\big\{\eta_{m,i}\big\}_{i=1, m \in \nN}^{\Lambda_m}$ subordinate to it. Moreover, by our construction, we may assume that $\supp \eta_{m,i} \subset U_{m,i}$ is compact, see \cite[Appendix C]{TuBook}. Now, let us define the functions $f_{m,i}: \varphi_{m,i}(U_{m,i}) \to \nR^n$ by
    \begin{equation*}
        f_{m, i} \coloneqq (\eta_{m, i} \,\phi) \circ \varphi_{m, i}^{\,-1} \;,
    \end{equation*}
    which have compact support $\supp f_{m,i} \subset \varphi_{m,i}(U_{m,i})$ and are, by the regularity of $\mathcal A$, in $W^{k,q}_0(\varphi_{m,i}(U_{m,i}))$. We extend them by zero as necessary, and consider for each $m$ and $i$, a bounded smooth open set $K_{m,i}$ such that
    \begin{equation*}
        \supp f_{m,i} \subset\subset K_{m,i} \subset\subset \varphi_{m,i}(U_{m,i}) \,.
    \end{equation*}
    In particular, for any $\varepsilon > 0$ there exist $f_{m,i}^\varepsilon \in C^\infty_c(K_{m,i})$ with the property that
    \begin{equation}
        \label{eq: estimate f_epsilon}
        \|f_{m, i} - f_{m, i}^\varepsilon\|_{W^{k,q}(K_{m,i})} < \frac{\varepsilon}{2^m  C_m}
    \end{equation}
    with
    \begin{equation}
        \label{eq: Cm constants}
        C_{m} = \sum_{i=1}^{\Lambda_m} C_{m,i}
    \end{equation}
    where $C_{m,i}$ are the constants in \cref{lemma_Adams} associated to the diffeomorphisms $\phi \circ \varphi_{m,i}^{-1}$ and open sets $K_{m,i}$. Consider the mapping $\widetilde\phi : U \to \nR^n$ defined by
    \begin{equation*}
        \widetilde{\phi} \coloneqq
        \begin{dcases}
            \sum_{m \in \nN}\sum_{i =1}^{\Lambda_m} f_{m, i}^\varepsilon \circ \varphi_{m, i} \quad &\text{in } \; V
            \\
            \phi &\text{in } \; U \setminus V
        \end{dcases}
    \end{equation*}
    where $f_{m, i}^\varepsilon \circ \varphi_{m, i}$ are taken to vanish outside their domain of definition $U_{m,i} \subset V$. Note that the infinite series is (pointwise) well-defined by the local finiteness of the covering \eqref{eq: loc fin cover}. We claim that $(U, \widetilde{\phi})$ is a chart in $\mathcal A$, which is $C^\infty$ compatible with $\mathcal B$, thus reaching the desired contradiction. For the second condition, consider any chart $(W,\psi) \in \mathcal B$ intersecting $U$ and notice that 
    \begin{equation*}
        \widetilde{\phi} \circ \psi^{-1} = \sum_{m \in \nN}\sum_{i = 1}^{\Lambda_m} f_{m, i}^\varepsilon \circ \left(\varphi_{m, i}^{\,} \circ \psi^{-1}\right) \,,
    \end{equation*}
    in $U \cap W$. Since $\varphi_{m, i}^{\,} \circ \psi^{-1} \in C^\infty_{loc}$, we have that $\widetilde{\phi} \circ \psi^{-1} \in C^\infty_{loc}$ due to \cite[Theorem 3.41]{AdamsFournierBook}. In order to show that $(U,\widetilde{\phi})$ is a chart in $\mathcal A$, it suffices to verify that $\widetilde{\phi} \circ \phi^{-1} : \phi(U) \to \widetilde{\phi}(U)$ is a $W^{k,q}$ diffeomorphism. To do so, let us consider for $N \in \nN$ the mappings
    \begin{equation*}
        \phi_N =
        \begin{dcases}
            \sum_{m = 1}^N\sum_{i =1}^{\Lambda_m} \eta_{m,i} \phi \quad &\text{in } \; V
            \\
            \phi &\text{in } \; U \setminus V
        \end{dcases}
    \end{equation*}
    and
    \begin{equation*}
        \widetilde{\phi}_N \coloneqq
        \begin{dcases}
            \sum_{m = 1}^N\sum_{i =1}^{\Lambda_m} f_{m, i}^\varepsilon \circ \varphi_{m, i} \quad &\text{in } \; V
            \\
            \phi &\text{in } \; U \setminus V
        \end{dcases}
    \end{equation*}
    defined on $U$. Again, we remark that the compactly supported maps $\eta_{m,i} \phi$ and $f_{m, i}^\varepsilon \circ \varphi_{m, i}$ are to be understood to extend by zero to the whole $V$. Then, 
    \begin{equation}
        \label{eq: phi_N difference}
        \left(\widetilde{\phi}_N - \phi_N \right) \circ \phi^{-1} =
        \begin{dcases}
            \sum_{m = 1}^N \sum_{i =1}^{\Lambda_m} \left(f_{m, i}^\varepsilon \circ \varphi_{m, i} - \eta_{m, i} \phi \right) \circ \phi^{-1} \quad &\text{in } \; \phi(V)
            \\
            0 &\text{in } \; \phi(U \setminus V)
        \end{dcases}
    \end{equation}
    are well defined in $W^{k,q}_{0}(\phi(U))$ by \cref{lemma_Adams} and converge pointwise to $\widetilde{\phi} \,\circ\, \phi^{-1} - \Id$. In fact, for any $N > L \geq 1$, there holds
    \begin{align*}
        \| (\widetilde{\phi}_{N} - \phi_{N})  \circ \phi^{-1} - & (\widetilde{\phi}_{L}  - \phi_{L}) \circ \phi^{-1}\|_{W^{k,q}(\phi(U))}
        \\
        &\leq \sum_{m=L+1}^N \sum_{i = 1}^{\Lambda_m} \| (f_{m,i}^\varepsilon \circ \varphi_{m,i} - \eta_{m,i} \,\phi) \circ \phi^{-1} \|_{W^{k,q}(\phi(U))}
        \\
        &= \sum_{m=L+1}^N \sum_{i=1}^{\Lambda_m} \|(f_{m,i}^\varepsilon - (\eta_{m,i}\phi) \circ \varphi_{m,i}^{-1}) \circ (\varphi_{m,i} \circ \phi^{-1})\|_{W^{k,q}(\phi(U))}
        \\
        &= \sum_{m=L+1}^N \sum_{i=1}^{\Lambda_m} \|(f_{m,i}^\varepsilon - f_{m,i} ) \circ (\phi \circ \varphi_{m,i}^{-1})^{-1}\|_{W^{k,q}(\phi(\varphi_{m,i}^{-1}(K_{m,i})))}  
        \\
        &\leq \sum_{m=L+1}^N \sum_{i = 1}^{\Lambda_m} C_{m,i} \, \|f_{m,i}^\varepsilon - f_{m,i} \|_{W^{k,q}(K_{m,i})}
        \\
        &\leq \sum_{m=L+1}^N \sum_{i=1}^{\Lambda_m} C_{m, i} \, \frac{\varepsilon}{2^m C_m} \leq \sum_{m=L+1}^N\frac{\varepsilon}{2^m} \,,
    \end{align*}
    where in the last inequalities we have used the estimate of \cref{lemma_Adams} together with \eqref{eq: estimate f_epsilon} and \eqref{eq: Cm constants}. It follows that \eqref{eq: phi_N difference} is a Cauchy sequence in $W^{k, q}(\phi(U))$ and in particular converges to $\widetilde \phi \circ \phi^{-1} - \Id$ in $W^{k,q}(\phi(U))$. Moreover, using analogous estimates, we obtain that
    \begin{align*}
        \| \widetilde{\phi} \circ \phi^{-1} - \Id\|_{W^{k, q}(\phi(U))} & \leq \sum_{m \in \nN} \sum_{i = 1}^{\Lambda_m} \| \left(f_{m, i}^\varepsilon \circ \varphi_{m, i} - \eta_{m, i} \phi \right) \circ \phi^{-1} \|_{W^{k, q}(\phi(U))} 
        \\
        & \leq \sum_{m \in \nN} \sum_{i = 1}^{\Lambda_m} C_{m, i} \, \frac{\varepsilon}{2^m C_m} = 2\varepsilon \,. 
    \end{align*}
    Since $W^{k,q} \hookrightarrow C^1$, we have that $\widetilde{\phi} \circ \phi^{-1} - \Id$ is arbitrarily small in $C^1$, implying that $\widetilde{\phi} \circ \phi$ is a $C^1$ diffeomorphism. Since $\widetilde{\phi} \circ \phi^{-1} \in W^{k,q}(\phi(U))$, \cite[Theorem A.1]{AvalosCogoRoyo1} implies that its inverse is in $W^{k,q}_{loc}(\widetilde\phi(U))$ and consequently $\widetilde{\phi} \circ \phi^{-1}$ is a $W^{k,q}$ diffeomorphism. This contradicts the maximality of $(B,\mathcal B)$ and proves that $B = M$.

    For the second part, suppose that $\mathcal B' \subset \mathcal A$ is another $C^\infty$ differential structure on $M$. In particular, it is $W^{k,q}$ compatible to $\mathcal B$ and due to $k \geq 3$ and $q > \tfrac{n}{2}$, both $\mathcal B$ and $\mathcal B'$ are $C^\infty$ differential structures on $M$ which are $C^1$ compatible to each other. Then, it follows from the density theorem \cite[Theorem 2.7]{Hirsch} that the identity map $\Id : (M,\mathcal B) \to (M,\mathcal B')$ can be approximated by a $C^\infty$ diffeomorphism $\Phi : (M,\mathcal B) \to (M,\mathcal B')$.
\end{proof}

\vspace{0.5cm}
\section[{\textbf{Regularity of metric structures}}]{Regularity of metric structures}
\label{section: Regularity of metric structures}

With the help of \cref{thm: Whitney Sobolev}, we can prove the following theorem concerning the optimal regularity of the elements in an isometry class, which extends to $q > \tfrac{n}{2}$ our previous result \cite[Theorem C]{AvalosCogoRoyo2}. The original (local) idea goes back to Sabitov--Shefel \cite{Sabitov-Shefel} and De Turck--Kazdan \cite{DeTurck_Kazdan}.
\begin{theorem}
    \label{thm: regularity metric struictures}
    Let $M$ be a smooth compact manifold of dimension $n \geq 3$ and consider a Riemannian metric $g \in W^{k,q}(M)$ for $k \geq 2$ and $q > \frac{n}{2}$. Suppose that $\Ric_g \in W^{l,q}(M)$ for some $l \leq k$. Then, there exists a $W^{k+1,q}$ diffeomorphism $\Phi : M \to M$ such that $\Phi^*g \in W^{l+2,q}(M)$.
\end{theorem}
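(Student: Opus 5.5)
The plan is to work in a harmonic atlas, improve the regularity of the metric there by elliptic bootstrapping, and then transport the improved structure back to the original smooth structure of $M$ via \cref{thm: Whitney Sobolev}.

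\textbf{Step 1 (local regularity in harmonic charts).} Take a harmonic atlas $\mathcal A_H=\{(U_\alpha,\varphi_\alpha)\}$ as in \cref{prop: harmonic atlas}. By \cref{prop: Adams Lemma}, the components of $g$ are $W^{k,q}_{loc}$ and the components of $\Ric_g$ are $W^{l,q}_{loc}$ in every chart of $\mathcal A_H$. In harmonic coordinates the metric satisfies
\begin{equation*}
    g^{pq}\partial_p\partial_q g_{ij} = -2\R_{ij} + Q_{ij}(g,\partial g),
\end{equation*}
where $Q$ is quadratic in $\partial g$ with coefficients smooth in $g$. Since $l\le k$, the right-hand side lies in $W^{l,q}_{loc}$. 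Here $\partial g\in W^{k-1,q}$ with $k-1\ge l$, and $W^{k-1,q}\otimes W^{k-1,q}\hookrightarrow W^{k-2,q}$. This gives $W^{k-2,q}$ at first; to reach $W^{l,q}$ when $l>k-2$, one bootstraps one derivative at a time. When a product fails to land in $L^q$ (the case $k=2$, $q<n$), use the same finite iteration of exponents $p_m$ as in the proof of \cref{prop: harmonic atlas}. Interior elliptic regularity (\cref{thm: Laplace interior elliptic regularity}(ii), applied with the operator $g^{pq}\partial_p\partial_q$, which has only top-order terms) then yields $g_{ij}\in W^{l+2,q}_{loc}$ in each harmonic chart. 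Because the metric components are now $W^{l+2,q}_{loc}$ with $l+2\ge k$, the second part of \cref{prop: harmonic atlas}(ii) upgrades $\mathcal A_H$ to a $W^{l+4,q}$ atlas, in particular a $W^{l+2,q}$ atlas. We may assume $l+2>k$, since otherwise $\Phi=\Id$ works.

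\textbf{Step 2 (smoothing the structure).} Let $\mathcal A$ be the $W^{l+2,q}$ differential structure generated by $\mathcal A_H$. Since $l+2\ge 3$ in the nontrivial case, \cref{thm: Whitney Sobolev} provides a $C^\infty$ structure $\mathcal B\subset\mathcal A$. The identity map $(M,\mathcal B)\to(M,\mathcal A_H)$ is $W^{l+2,q}$, so by \cref{lemma_Adams}(ii) the metric $g$ has $W^{l+2,q}_{loc}$ components in the charts of $\mathcal B$. This uses tensor transformation with a $W^{l+3,q}$ transition; to be safe, run Step 2 with the $W^{l+3,q}$ structure obtained from the $W^{l+4,q}$ atlas, so that the loss of one derivative in \cref{lemma_Adams}(ii) is absorbed.

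\textbf{Step 3 (pulling back to the original structure).} Both $\mathcal B$ and the original smooth structure $\mathcal S$ of $M$ are $C^\infty$ structures on the same topological manifold, and they are $C^1$-compatible. Indeed, harmonic charts are $W^{k+1,q}\subset C^1$-compatible with $\mathcal S$, hence $\mathcal S$ lies in the $W^{k+1,q}$ structure generated by $\mathcal A_H$. By the uniqueness part of \cref{thm: Whitney Sobolev}, applied within the $W^{k+1,q}$ structure (requiring $k+1\ge3$), there is a $C^\infty$ diffeomorphism $\Psi:(M,\mathcal S)\to(M,\mathcal B)$. Set $\Phi=\Psi$, viewed as a map $M\to M$. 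In $\mathcal S$-charts, $\Phi^*g$ has the components of $g$ in $\mathcal B$-charts composed with smooth maps, hence $\Phi^*g\in W^{l+2,q}(M)$.

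It remains to check that $\Phi$ is a $W^{k+1,q}$ diffeomorphism of $(M,\mathcal S)$. As a map $(M,\mathcal S)\to(M,\mathcal B)$ it is smooth. The identity $(M,\mathcal B)\to(M,\mathcal S)$ is $W^{k+1,q}$, since $\mathcal B\subset\mathcal A$ is compatible with the harmonic charts, which are $W^{k+1,q}$-related to $\mathcal S$. Composing these, using \cref{lemma_Adams}(i), gives the required regularity of $\Phi$ and $\Phi^{-1}$.

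\textbf{Main obstacle.} I expect the delicate point to be Step 1 in the low-regularity regime $k=2$, $q\le n$, where the quadratic term $Q(g,\partial g)$ does not lie in $L^q$ directly. This is handled by the exponent iteration already used in \cref{prop: harmonic atlas}. A second point of care is the derivative bookkeeping in Steps 2 and 3, which must be done so that the compatibility regularity never drops below $W^{l+2,q}$ for the metric components.
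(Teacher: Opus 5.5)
\textbf{The gap is in Step 1.} As stated, \cref{thm: Laplace interior elliptic regularity}(ii) for a metric in $W^{k,q}_{loc}$ only concludes $\mathbf u\in W^{k,p}_{loc}$, so the gain is capped by the regularity of the coefficients. In harmonic coordinates $g^{pq}\partial_p\partial_q g_{ij}=\Delta_g g_{ij}$. Applying the theorem directly to this equation therefore returns $g_{ij}\in W^{k,q}_{loc}$, which you already have. It can never deliver $W^{l+2,q}_{loc}$ with $l+2>k$, so your ``one derivative at a time'' bootstrap never starts. The missing idea, which is the heart of the paper's Step 1, is to differentiate the equation in $y_s$:
\begin{equation*}
    g^{pq}\partial_p\partial_q\bigl(\partial_s g_{ij}\bigr) = -2\,\partial_s\R_{ij} + \partial_s Q_{ij} - \partial_s g^{pq}\,\partial_p\partial_q g_{ij} \,.
\end{equation*}
You then apply the theorem to $\partial_s g_{ij}\in W^{k-1,q}_{loc}\hookrightarrow L^{q'}_{loc}$. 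In the nontrivial range $l\ge k-1$ with $k\ge 3$, the right-hand side lies in $W^{k-2,q}_{loc}$. Hence $\partial_s g_{ij}\in W^{k,q}_{loc}$, i.e.\ $g_{ij}\in W^{k+1,q}_{loc}$. If $l=k$, you repeat once with the now $W^{k+1,q}$ coefficients to reach $W^{k+2,q}_{loc}$. The alternative would be a separate regularity result for non-divergence operators with $W^{k,q}$ coefficients that gains up to $W^{k+1,q}$. That is not what the cited theorem says, and it is essentially proved by this same differentiation trick.

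This also relocates the low-regularity difficulty you flag as the main obstacle. For $k=2$ and $q>\tfrac n2$, the term $Q_{ij}$ itself is in $L^q$, by the very embedding $W^{1,q}\otimes W^{1,q}\hookrightarrow L^q$ you quote. What fails to be in $L^q$ when $q\le n$ are the terms $\partial_s Q_{ij}$ and $\partial_s g^{pq}\,\partial_p\partial_q g_{ij}$, which lie in $W^{1,q}\otimes L^q$ in the differentiated equation. That is where the finite exponent iteration must be run. Apart from this, your plan matches the paper: the harmonic atlas, the upgrade via \cref{prop: harmonic atlas}(ii), smoothing by \cref{thm: Whitney Sobolev}, and its uniqueness part applied inside the $W^{k+1,q}$ structure. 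Your Steps 2 and 3 are correct and more explicit than the paper about the derivative bookkeeping. The paper uses the $W^{l+4,q}$ structure where you use $W^{l+3,q}$; either absorbs the loss in \cref{lemma_Adams}(ii).
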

\begin{proof}
    If $l < k-1$, the statement is trivial, so we assume that $k-1 \leq l$. By \cref{prop: harmonic atlas}, there exists a finite atlas $\mathcal A_H$ of $M$ consisting of harmonic charts, which is $W^{k+1,q}$ compatible with the $C^\infty$ differential structure of $M$. In any such harmonic chart $(V,\varphi) \in \mathcal A_H$, with coordinates $\{y_i\}_{i=1}^n$, the metric components satisfy 
    \begin{equation}
        \label{eq: Ricci harmonic PDE}
        g^{pq} \partial_p\partial_qg_{ij} = -2\R_{ij} +  \,Q_{ij}\bigl(g, \partial g\bigr) \,,
    \end{equation}
    where  $Q_{ij}$ is quadratic in $\partial g$. 

    \medskip
    \textit{Step 1.} \, First we show that $g_{ij} \in W^{l+2,q}_{loc}(\varphi(V))$. Differentiating \eqref{eq: Ricci harmonic PDE} in direction $y_s$ and rearranging, we obtain
    \begin{equation}
        \label{eq: differentiation of harmonic ricci}
        g^{pq}\partial_p\partial_q \bigl(\partial_sg_{ij}\bigl) = -2\,\partial_s\R_{ij} + \partial_sQ_{ij} - \partial_sg^{pq} \, \partial_p\partial_qg_{ij} \,,
    \end{equation}
    where the terms on the right-hand side satisfy
    \begin{align*}
        \partial_s\R_{ij} \in W^{l-1,q}_{loc}(\varphi(V)) \,,\quad \partial_sQ_{ij} \,, \; \partial_sg^{pq} \, \partial_p\partial_qg_{ij} \in W^{k-1,q}_{loc}(\varphi(V)) \otimes W^{k-2,q}_{loc}(\varphi(V))
    \end{align*}
    due to the hypotheses and \cref{prop: Adams Lemma}. Notice that due to $q > \tfrac{n}{2}$, the Sobolev embedding $W^{k-1,q}_{loc}(\varphi(V)) \hookrightarrow L^{q'}_{loc}(\varphi(V))$ holds for $n \geq 3$, and then $\partial_sg_{ij} \in L^{q'}_{loc}(\varphi(V))$. 
    
    If $k \geq 3$, the right-hand side of \eqref{eq: differentiation of harmonic ricci} is in $W^{k-2,q}_{loc}(\varphi(V))$ and \cref{thm: Laplace interior elliptic regularity} implies that $\partial_sg_{ij}\in W^{k,q}_{loc}(\varphi(V))$. In turn, we get that $g_{ij} \in W^{k+1,q}_{loc}(\varphi(V))$. Now, if $l = k$, the right hand side of \eqref{eq: differentiation of harmonic ricci} is in $W^{k-1,q}_{loc}(\varphi(V))$ and appealing to \cref{thm: Laplace interior elliptic regularity} one more time we deduce that $g_{ij} \in W^{k+2,q}_{loc}(\varphi(V))$. Thereby, $g_{ij} \in W^{l+2,q}_{loc}(\varphi(V))$ regardless of $l$.

    If $k = 2$ and $q > n$ the same Sobolev multiplications hold and the proof is similar, as in \cite[Theorem C]{AvalosCogoRoyo2}. On the other hand, for $q = n$, the embedding
      \begin{equation*}
        \partial_sQ_{ij}\,, \; \partial_sg^{pq} \, \partial_p\partial_qg_{ij} \in W^{1,n}_{loc}(\varphi(V)) \otimes L^{n}_{loc}(\varphi(V)) \hookrightarrow  L^{\frac{3n}{4}}_{loc}(\varphi(V)) 
    \end{equation*}
    ensures that the right-hand side of \eqref{eq: differentiation of harmonic ricci} is in $L^{\frac{3n}{4}}_{loc}(\varphi(V))$ and \cref{thm: Laplace interior elliptic regularity} implies that $\partial_s g_{ij} \in W^{2, \frac{3n}{4}}_{loc}(\varphi(V))$. In turn, $g_{ij} \in W^{3,\frac{3n}{4}}_{loc}(\varphi(V)) \hookrightarrow W^{2,3n}_{loc}(\varphi(V))$ by Sobolev embedding and we are again in the case $q > n$. On the other hand, if $q < n$, we
    consider the embedding 
    \begin{equation*}
        \partial_sQ_{ij}\,, \; \partial_sg^{pq} \, \partial_p\partial_qg_{ij} \in W^{1,q}_{loc}(\varphi(V)) \otimes L^{q}_{loc}(\varphi(V)) \hookrightarrow  L^{p_0}_{loc}(\varphi(V)) \,, \qquad \frac{1}{p_0} \coloneq \frac{2}{q} - \frac{1}{n} \,.
    \end{equation*}
    It is easy to check that $\tfrac{n}{3} < p_0 < q$ due to $\tfrac{n}{2} < q < n$. 
    Hence, the right-hand side of \eqref{eq: differentiation of harmonic ricci} is in $L^{p_0}_{loc}(\varphi(V))$ and we can apply \cref{thm: Laplace interior elliptic regularity} to obtain that $g_{ij} \in W^{3,p_0}_{loc}(\varphi(V))$. 
    Now, if $p_0 \geq \tfrac{2nq}{3q+n}$, the Sobolev embedding
    \begin{equation}
        \label{embedding p_0 q}
        \partial_sQ_{ij}\,, \; \partial_sg^{pq} \, \partial_p\partial_qg_{ij} \in W^{2, p_0}_{loc}(\varphi(V)) \otimes W^{1, p_0}_{loc}(\varphi(V)) \hookrightarrow  L^{q}_{loc}(\varphi(V))
    \end{equation}
    holds and \cref{thm: Laplace interior elliptic regularity} grants $g_{ij} \in W^{3,q}_{loc}(\varphi(V))$. If $p_0 < \tfrac{2nq}{3q+n}$ instead, in particular $p_0 < \tfrac{n}{2}$ because of $q < n$ and we may consider the embedding
    \begin{equation*}
         W^{2,p_0}_{loc}(\varphi(V)) \otimes W^{1, p_0}_{loc}(\varphi(V)) \hookrightarrow  L^{p_1}_{loc}(\varphi(V)) \,, \qquad \frac{1}{p_1} \coloneq \frac{2}{p_0} - \frac{3}{n} \,.
    \end{equation*}
    One can check that $p_0 < p_1 < \infty$ due to $p_0 > \tfrac{n}{3}$. We deduce that the right-hand side of \eqref{eq: differentiation of harmonic ricci} is in $L^{p_1}_{loc}(\varphi(V))$ and thus $g_{ij} \in W^{3,p_1}_{loc}(\varphi(V))$ by \cref{thm: Laplace interior elliptic regularity}. If $p_1 \geq \tfrac{2nq}{3q+n}$, we are done in one step as before. If not, we may iterate this process for a sequence of increasing positive numbers $p_1 < p_2 < \ldots < p_M$ defined by
    \begin{equation*}
        \frac{1}{p_{m+1}} \coloneq \frac{2}{p_m} - \frac{3}{n}
    \end{equation*}
    as long as $p_m < \tfrac{2nq}{3q+n}$. In view of
    \begin{equation*}
        \frac{1}{p_m} - \frac{1}{p_{m+1}} = \frac{3}{n} - \frac{1}{p_m} > \frac{3}{n} - \frac{1}{p_0} = \frac{4}{n} - \frac{2}{q} > 0 \,,
    \end{equation*}
    we conclude that the sequence $(p_m)_m$ reaches $\tfrac{2nq}{3q+n}$ in finitely many steps and, by the arguments above, we obtain that $g_{ij} \in W^{3,q}_{loc}(\varphi(V))$. In turn, we have that
    \begin{equation*}
        \partial_sQ_{ij} \,, \; \partial_sg^{pq} \, \partial_p\partial_qg_{ij} \in W^{2,q}_{loc}(\varphi(V)) \otimes W^{1,q}_{loc}(\varphi(V)) \hookrightarrow W^{1,q}_{loc}(\varphi(V))
    \end{equation*}
    and if $l = k = 2$, the right-hand side of \eqref{eq: differentiation of harmonic ricci} is in $W^{1,q}_{loc}(\varphi(V))$. \cref{thm: Laplace interior elliptic regularity} then implies that $g_{ij} \in W^{4,q}_{loc}(\varphi(V))$. We have thus established that $g_{ij} \in W^{l+2,q}_{loc}(\varphi(V))$ regardless of $k \geq 2$, $q > \tfrac{n}{2}$ and $l \leq k$.

    \textit{Step 2.} \, To conclude the proof we use the same argument as in \cite[Theorem C]{AvalosCogoRoyo2}, but this time appealing to \cref{thm: Whitney Sobolev} rather than the original classical theorem for $C^k$ differential structures: given the local improvement of metric components in harmonic coordinates, part $(ii)$ of \cref{prop: harmonic atlas} implies that the harmonic charts in $\mathcal A_H$ are $W^{l+4}$ compatible to each other and thus generate a $W^{l+4,q}$ differential structure on $M$. Moreover, $g$ is $W^{l+2, q}$ regular with respect to such differential structure. By \cref{thm: Whitney Sobolev}, there exists a $C^\infty$ differential structure on $M$ which is $W^{l+4,q}$ compatible to $\mathcal A_H$. Denoting by $M'$ the manifold $M$ endowed with this new $C^\infty$ differential structure and using \cref{prop: Adams Lemma}, we get that $g \in W^{l+2,q}(M')$. Moreover, \cref{thm: Whitney Sobolev} implies that there exists a $C^\infty$ diffeomorphism $\Phi : M \to M'$. Consequently, $\Phi^*g \in W^{l+2,q}(M)$. Note, however, that $\Phi \in W^{k+1,q}(M,M)$, since the regularity of harmonic coordinates in the original charts of $M$ implies that these two $C^\infty$ differential structures are only $W^{k+1,q}$ compatible.
\end{proof}

\vspace{0.5cm}
\section[{\textbf{Regularity of conformal structures}}]{Regularity of conformal structures}
\label{section: Regularity of conformal structures}

In this section we prove the main theorems of the paper. In view of \cref{thm: regularity metric struictures}, the goal is to use the hypotheses on the regularity of a conformally covariant tensor in order to find a conformal deformation into a metric whose Ricci tensor has good regularity properties, and then bootstrap the argument. We begin by deriving the main geometric PDEs, different versions thereof have appeared in \cite{TianViaclovsky1,avalos2024sobolev,AvalosCogoRoyo2,LaurainMartino}.
\begin{lemma}
    \label{lemma: conformal PDEs lowreg}
    Let $M$ be a smooth manifold of dimension $n \geq 3$ and consider a Riemannian metric $g \in W^{2,q}(M)$ for $q > \max\{2,\frac{n}{2}\}$. Then, an equation of the type
    \begin{equation}
    \label{eq: Cotton}
         \Delta \Ric  = \Div\C + \Rm * \Ric + \nabla^2\R
    \end{equation}
    holds in $W^{-2,q}(M)$. In particular, for $n \geq 4$, 
    \begin{equation}
    \label{eq: Weyl}
        \Delta \Ric  = \Div\Div\W + \Rm * \Ric + \nabla^2\R
    \end{equation}
    \begin{equation}
    \label{eq: Bach}
          \Delta \Ric  = \B + \Rm * \Ric + \nabla^2\R
    \end{equation}
    hold in $W^{-2,q}(M)$.
\end{lemma}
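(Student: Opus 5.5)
The plan is to derive \eqref{eq: Cotton} first for smooth metrics by a direct identity computation, then extend it to $g \in W^{2,q}(M)$ by density using the continuity statements of \cref{lemma: curvature continuity}. We then obtain \eqref{eq: Weyl} and \eqref{eq: Bach} from \eqref{eq: Cotton} via \eqref{eq: Cotton-Weyl} and the definition \eqref{def: Bach}.

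\textbf{Smooth identity.} For smooth $g$, we start from the definition of the Cotton tensor:
\begin{equation*}
(n-2)\,\C_{ijk} = \nabla_k\R_{ij} - \nabla_j\R_{ik} + \tfrac{n-2}{2(n-1)}\bigl(\nabla_j\R\, g_{ik} - \nabla_k\R\, g_{ij}\bigr).
\end{equation*}
Taking the divergence in $k$ gives $(n-2)\nabla^k\C_{ijk} = \Delta\R_{ij} - \nabla^k\nabla_j\R_{ik} + c_n\nabla_i\nabla_j\R + c_n'\Delta\R\, g_{ij}$. We commute derivatives in the second term with the Ricci identity:
\begin{equation*}
\nabla^k\nabla_j\R_{ik} = \nabla_j\nabla^k\R_{ik} + \Rm*\Ric.
\end{equation*}
The contracted Bianchi identity $\nabla^k\R_{ik} = \tfrac12\nabla_i\R$ turns this into $\tfrac12\nabla_j\nabla_i\R + \Rm*\Ric$. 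Collecting terms gives $\Delta\Ric = (n-2)\Div\C + \Rm*\Ric + \nabla^2\R$, where $\nabla^2\R$ denotes a combination of $\nabla_i\nabla_j\R$ and $g_{ij}\Delta\R$, and the constants are absorbed into the $*$ convention.

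\textbf{Low regularity.} Each term must be given a meaning in $W^{-2,q}$:
\begin{itemize}
\item $\Ric \in L^q$, so $\Delta\Ric \in W^{-2,q}$ by the duality extension $\Delta_g : L^p \to W^{-2,p}$; alternatively, apply $\nabla$ twice using \eqref{eq: nabla continuous}.
\item $\C \in W^{-1,q}$, so $\Div\C \in W^{-2,q}$ by \eqref{eq: nabla continuous}.
\item $\R \in L^q$, so $\nabla^2\R \in W^{-2,q}$.
\item $\Rm*\Ric \in L^q\otimes L^q \hookrightarrow L^{q/2}$. This embeds into $W^{-2,q}$ by duality, since $W^{2,q'} \hookrightarrow L^{(q/2)'}$ holds when $\tfrac{1}{q'}-\tfrac{2}{n} \le 1-\tfrac{2}{q}$, i.e.\ $\tfrac{1}{q} \le \tfrac{2}{n}$, which follows from $q > \tfrac{n}{2}$.
\end{itemize}
Approximating $g$ by smooth $g_j \to g$ in $W^{2,q}$, each term depends continuously on $g$ into $W^{-2,q}$. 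For the curvature terms this is \cref{lemma: curvature continuity}. For the operators $\nabla,\Delta$, whose coefficients also vary, we use the local forms $\partial + \Gamma*$ together with the multiplication estimates in that proof. Passing to the limit yields \eqref{eq: Cotton} in $W^{-2,q}(M)$. Alternatively, one can argue directly using the Bianchi and Ricci identities, which \cref{lemma: curvature continuity} guarantees in the weak setting.

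\textbf{Higher dimensions.} For $n\ge4$, \eqref{eq: Cotton-Weyl} gives $\C = \tfrac{1}{n-3}\Div\W$ up to index placement, which yields \eqref{eq: Weyl}. By \eqref{def: Bach}, $\B = \tfrac{1}{n-3}\Div\Div\W + \Rm*\Ric$, since $\W$ is linear in $\Rm$ and $g$; this yields \eqref{eq: Bach}. These identities again hold for $W^{2,q}$ metrics by the same density argument, provided the terms are interpreted in $W^{-2,q}$.

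\textbf{Main obstacle.} The only delicate point is the continuity of the products of rough coefficients with distributions, such as $\Gamma * \nabla\Ric$ with $\Gamma\in W^{1,q}$ and $\nabla\Ric\in W^{-1,q}$. This is exactly where $q>\max\{2,\tfrac n2\}$ is needed, and it is already furnished by \eqref{eq: nabla continuous}.
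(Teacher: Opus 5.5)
Your proposal is correct and follows the paper's proof. For \eqref{eq: Cotton}, the paper does not derive the identity but quotes Lemma 4.1 of the authors' earlier work; your derivation supplies exactly that content: the divergence of $\C$, the Ricci commutation, the contracted Bianchi identity, the placement of $\Rm*\Ric\in L^{q/2}\hookrightarrow W^{-2,q}$, and the density or weak-identity justification. For $n\ge 4$, you obtain \eqref{eq: Weyl} and \eqref{eq: Bach} by inserting \eqref{eq: Cotton-Weyl} and \eqref{def: Bach}, justified by the continuity of $\nabla$ on $L^q$ and $W^{-1,q}$, which is precisely the paper's argument.
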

\begin{proof}
   It was proven in \cite[Lemma 4.1]{AvalosCogoRoyo2} that
   \begin{equation}
       \begin{split}
           \Delta \R_{ij} & = (n-2) \nabla^k\C_{ijk} - \R_i^{\;l}\R_{jl} - \R_{j\,\;i}^{\;kl} \R_{kl} 
           \\
           & \qquad \qquad \qquad + \frac{n-2}{2(n-1)} \Bigl(\nabla_i \nabla_j \R + \frac{1}{n-2}  \Delta \R  \, g_{ij}\Bigr)
        \end{split}
    \end{equation}
    holds in $W^{-2,q}(M)$, which gives \eqref{eq: Cotton}. If $n \geq 4$, simply by inserting \eqref{eq: Cotton-Weyl} and \eqref{def: Bach}, which hold in $W^{-1,q}(M)$ and $W^{-2,q}(M)$ respectively by the continuity of \eqref{eq: nabla continuous}, we have that
    \begin{equation}
    \begin{split}
        \Delta \R_{ij} & = \frac{n-2}{n-3} \, \nabla^k\nabla^l \W_{iljk} - \R_i^{\;l}\R_{jl} - \R_{j\,\;i}^{\;kl} \R_{kl} \\
        & \qquad \qquad \qquad + \frac{n-2}{2(n-1)} \Bigl(\nabla_i \nabla_j \R + \frac{1}{n-2}  \Delta \R  \, g_{ij}\Bigr)
    \end{split}
    \end{equation}
    and
    \begin{equation}
    \begin{split}
          \Delta \R_{ij} & = (n-2) \B_{ij}-  \R^{pq}\W_{qipj} - \R_i^{\;l}\R_{jl} - \R_{j\,\;i}^{\;kl} \R_{kl} \\
          & \qquad \qquad \qquad + \frac{n-2}{2(n-1)} \Bigl(\nabla_i \nabla_j \R + \frac{1 }{n-2} \Delta \R \,  g_{ij}\Bigr)
    \end{split}
    \end{equation}
    hold in $W^{-2,q}(M)$. This establishes \eqref{eq: Weyl} and \eqref{eq: Bach}.
\end{proof}

Next, we introduce the following simple but powerful conformal deformation by first eigenfunctions. Such good \emph{conformal gauge} was used in \cite{Maxwell1} and \cite{avalos2024sobolev}, as well as in the resolution of the Yamabe problem in \cite{AvalosCogoRoyo1}.
\begin{lemma}
    \label{lemma: good conformal gauge}
    Let $M$ be a smooth compact manifold of dimension $n \geq 3$ and consider a Riemannian metric $g \in W^{k,q}(M)$ for $k \geq 2$ and $q > \frac{n}{2}$.
    There exists a positive function $u \in W^{k,q}(M)$ such that the conformal metric $u^{\frac{4}{n-2}} g$ has scalar curvature in $W^{k, q}(M)$.
\end{lemma}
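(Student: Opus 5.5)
Take $u$ to be a first eigenfunction of the conformal Laplacian $L_g = -a_n\Delta_g + \R_g$, where $a_n = \tfrac{4(n-1)}{n-2}$. Under the conformal change $\tilde g = u^{\frac{4}{n-2}}g$ the scalar curvature transforms as
\begin{equation*}
    \R_{\tilde g} = u^{-\frac{n+2}{n-2}}\bigl(-a_n\Delta_g u + \R_g u\bigr) \,.
\end{equation*}
If $L_g u = \lambda_1 u$, this gives $\R_{\tilde g} = \lambda_1 u^{-\frac{4}{n-2}}$. This is as regular as $u$, and in particular it lies in $W^{k,q}(M)$ whenever $u \in W^{k,q}(M)$ is positive and continuous. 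It therefore suffices to show three things: that such a first eigenfunction exists, that it is positive, and that it lies in $W^{k,q}(M)$.

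\emph{Existence.} Note $\R_g \in W^{k-2,q}(M) \subset L^q(M)$ with $q > \tfrac n2$, so $W^{1,2}\ni v\mapsto \int \R_g v^2$ is well defined and compact relative to the Dirichlet energy, by the Sobolev embedding $W^{1,2}\hookrightarrow L^{\frac{2n}{n-2}}$ and Hölder. Minimize the Rayleigh quotient $\int (a_n|\nabla v|_g^2 + \R_g v^2)\,d\mu_g / \int v^2 \,d\mu_g$ over $W^{1,2}(M)\setminus\{0\}$. Since $q>\tfrac n2$, for every $\delta>0$ there is a bound $\int |\R_g| v^2 \le \delta\|\nabla v\|_2^2 + C_\delta\|v\|_2^2$, so the quotient is bounded below and minimizing sequences are bounded in $W^{1,2}$. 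Rellich compactness and weak lower semicontinuity then give a minimizer $u$. Replacing $u$ by $|u|$ keeps it a minimizer, so we may take $u \ge 0$. It is a weak solution of $-a_n\Delta_g u + \R_g u = \lambda_1 u$.

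\emph{Regularity.} Bootstrap with \cref{thm: Laplace global elliptic regularity}: $\Delta_g u = a_n^{-1}(\R_g - \lambda_1)u$. Starting from $u \in W^{1,2}$, iterate the product $L^q \cdot W^{1,p}$ (or $L^q\cdot L^{p^*}$) together with parts $(i)$/$(ii)$. Each step gains integrability because $q>\tfrac n2$, exactly as in the iterations of \cref{prop: harmonic atlas}, and we reach $u\in W^{2,q}(M)\hookrightarrow C^0(M)$. For $k\ge 3$, continue: if $u\in W^{j,q}$ with $2\le j<k$, then $\R_g u\in W^{k-2,q}\otimes W^{j,q}\hookrightarrow W^{\min(j,k-2),q}$ by the multiplication property for $q>\tfrac n2$. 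Part $(ii)$ then yields $u \in W^{\min(j+2,k),q}$, and iterating gives $u\in W^{k,q}(M)$.

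\emph{Positivity.} This is the main obstacle, since $\R_g$ is merely $L^q$ and the classical smooth strong maximum principle does not directly apply. I would use a weak Harnack inequality for nonnegative supersolutions of $-\Delta_g u + c u \ge 0$ with $c = a_n^{-1}(\R_g-\lambda_1)\in L^q$, $q>\tfrac n2$ (Trudinger/Gilbarg--Trudinger Theorem 8.18, valid in divergence form, since $g\in W^{2,q}$ is continuous and $\Delta_g$ is uniformly elliptic in divergence form). This shows that the zero set of $u\ge 0$ is open. It is also closed by continuity, and $u\not\equiv 0$, so connectedness of $M$ gives $u>0$ (treat each component separately otherwise).

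Finally, $u\in W^{k,q}$ is positive, so $u^{-\frac{4}{n-2}}\in W^{k,q}$: the space $W^{k,q}$ is an algebra for $q > \tfrac n2$ and is stable under composition with smooth functions on the range of $u$. Hence $\R_{\tilde g}\in W^{k,q}(M)$, as claimed.
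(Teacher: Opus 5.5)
Your proposal is correct and follows the paper's proof. Like the paper, you take a positive first eigenfunction $u \in W^{k,q}(M)$ of the conformal Laplacian and read off $\R_{u^{4/(n-2)}g} = \lambda u^{-\frac{4}{n-2}} \in W^{k,q}(M)$ from the conformal transformation law; the paper cites its earlier work for the existence, positivity and $W^{k,q}$ regularity of $u$, whereas you supply these yourself (direct method, bootstrap via \cref{thm: Laplace global elliptic regularity}, weak Harnack for positivity), and those arguments are sound.
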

\begin{proof}
    Using the direct method of calculus of variations, it is not hard to show the existence of a positive function $u \in W^{k,q}(M)$ satisfying the eigenvalue problem 
    \begin{equation*}
         -4 \, \frac{n-1}{n-2}\Delta_gu + \R_gu = \lambda u
    \end{equation*}
    for some $\lambda \in \nR$, see \cite[Section 2.3]{AvalosCogoRoyo1} or \cite[Section 3.3.2]{RoyoThesis} for instance. By the standard formula for conformal deformations of scalar curvature, the scalar curvature of the metric $\tilde g = u^{\frac{4}{n-2}} g$ is given by
    \begin{equation*}
        \R_{\tilde g} = u^{-\frac{n+2}{n-2}} \left( -4 \frac{n-1}{n-2}\Delta_gu + \R_gu\right) = \lambda u^{-\frac{4}{n-2}}
    \end{equation*}
    and is thus of class $W^{k,q}(M)$.
\end{proof}
We are now ready to prove \cref{thm: main Bach}:
\begin{proof}[Proof of \cref{thm: main Bach}]
    If $l < k-3$, the statement is trivially true with $\Phi = id_M$ and $u \equiv 1$, so let us assume that $k-3 \leq l$. Let $g_1 = u_1^2 g$ with $u_1 \in W^{k,q}(M)$ be the conformal deformation given in \cref{lemma: good conformal gauge}, so that $\R_{g_1} \in W^{k,q}(M)$. Consider the equation
    \begin{equation}
        \label{eq: PDE g_1}
        \Delta_{g_1}\Ric_{g_1} = \B_{g_1} + \Ric_{g_1}*\Rm_{g_1} + \nabla^2\R_{g_1}
    \end{equation}
    from \cref{lemma: conformal PDEs lowreg} and note that $\Ric_{g_1} \in L^{q'}(M)$ due to $q > 2$. In view of the conformal covariance $\B_{u^2g} = u^{-2}\B_g$
    of the Bach tensor, we have $\B_{g_1} \in W^{l,q}(M)$, while
    \begin{align*}
        \Ric_{g_1}*\Rm_{g_1} \in W^{k-2,q}(M) \otimes W^{k-2,q}(M) \hookrightarrow W^{k-3,q}(M)
    \end{align*}
    holds whenever $k \geq 3$, or $k = 2$ and $q \geq 4$. In this case, the right-hand side of \eqref{eq: PDE g_1} is in $W^{k-3,q}(M)$ and \cref{thm: Laplace global elliptic regularity} implies that $\Ric_{g_1} \in W^{k-1,q}(M)$. If $k = 2$ but $2 < q < 4$, then the Sobolev multiplication
    \begin{equation*}
        L^{q} \otimes L^{q} \hookrightarrow W^{-1,p_0} \,, \qquad \frac{1}{p_0} \coloneq \frac{2}{q} - \frac{1}{4}
    \end{equation*}
    holds. Note that $1 < p_0 < q$ and therefore the right-hand side of \eqref{eq: PDE g_1} is in $W^{-1,p_0}(M)$. Appealing to \cref{thm: Laplace global elliptic regularity}, we deduce that $\Ric_{g_1} \in W^{1,p_0}(M)$. If $p_0 \geq 2$, then the Sobolev embedding $W^{1,p_0} \otimes L^{q} \hookrightarrow W^{-1,q}$ holds and $\Ric_{g_1} \in W^{1,q}(M)$ follows from \cref{thm: Laplace global elliptic regularity}. If $p_0 < 2$, then we use
    \begin{equation*}
        W^{1,p_0} \otimes L^{q} \hookrightarrow W^{-1,p_1} \,, \qquad \frac{1}{p_1} \coloneq \frac{1}{p_0} + \frac{1}{q} - \frac{1}{2}
    \end{equation*}
    and deduce $\Ric_{g_1} \in W^{1,p_1}(M)$. Note that $p_1 > p_0$ due to $q > 2$. If $p_1 \geq 2$ then it follows that $\Ric_{g_1} \in W^{1,q}(M)$ as before, and otherwise we iterate the argument with a sequence of embeddings
    \begin{equation*}
        W^{1,p_{m}} \otimes L^{q} \hookrightarrow W^{-1,p_{m+1}} \,, \qquad \frac{1}{p_{m+1}} \coloneq \frac{1}{p_m} + \frac{1}{q} - \frac{1}{2}
    \end{equation*}
    as long as $p_m < 2$. In view of the hypothesis $q > 2$, the increasing sequence $(p_m)_m$ of real numbers does not accumulate and reaches $2$ in finitely many steps. We conclude that $\Ric_{g_1} \in W^{1,q}(M)$. We have thus established that $\Ric_{g_1} \in W^{k-1,q}(M)$ regardless of $k \geq 2$ and $q > 2$. At this point, we may appeal to \cref{thm: regularity metric struictures} to deduce the existence of a global diffeomorphism $\Phi_1 \in W^{k+1,q}(M,M)$ such that $\Phi_1^*(u^2_1g) \in W^{k+1,q}(M)$.

    If $l = k-3$ we are done. If $k-2 \leq l \leq k$, we name $\tilde g_1 = \Phi_1^*(u^2_1g)$ and notice that $\B_{\tilde g_1} = \Phi_1^*(\B_{g_1}) \in W^{l,q}(M)$ due to the regularity of $\Phi_1$ and part $(ii)$ of \cref{lemma_Adams}. We can now restart the argument: let $u_2 \in W^{k+1,q}(M)$ be the first positive eigenfunction of the conformal Laplacian associated to $\tilde g_1$ as in \cref{lemma: good conformal gauge}. Consider the conformal metric $g_2 = u^2_2 \,  \tilde g_1 \in W^{k+1,q}(M)$, whose scalar curvature is in $W^{k+1,q}(M)$ by \cref{lemma: good conformal gauge}. Now we have
    \begin{equation}
        \label{eq: PDE g_2}
        \Delta_{g_2}\Ric_{g_2} = \B_{g_2} + \Ric_{g_2}*\Rm_{g_2} + \nabla^2\R_{g_2}
    \end{equation}
    with the right-hand side in $W^{k-2,q}(M)$ thanks to the Sobolev multiplication
    \begin{align*}
        W^{k-1,q} \otimes W^{k-1,q} \hookrightarrow W^{k-2,q} \,.
    \end{align*}
    It follows from \cref{thm: Laplace global elliptic regularity} that $\Ric_{g_2} \in W^{k,q}(M)$ and, appealing to \cref{thm: regularity metric struictures}, we establish that for some diffeomorphism $\Phi_2 \in W^{k+2,q}(M,M)$, there holds
    \begin{align*}
        \Phi^*_2&(u_2^2 \, \tilde g_1) = \Phi^*(u^2 g) \in W^{k+2,q}(M)
    \end{align*}
    with $\Phi = \Phi_1 \circ \Phi_2$ and $u = (u_2 \circ \Phi_1^{-1}) u_1$. Notice that due to the regularity of $\Phi_1$ and $u_1$, we only have $\Phi \in W^{k+1,q}(M,M)$ and $u \in W^{k,q}(M)$ via \cref{lemma_Adams}.

    If $l = k-2$ we are done, and if $k-1 \leq l \leq k$, we name $\tilde g_2 = \Phi_2^*g_2$. Since $\B_{\tilde g_2} = \Phi_2^*(\B_{g_2}) \in W^{l,q}(M)$, we may apply the same reasoning to obtain positive function $u_3 \in W^{k+2,q}(M)$ and a diffeomorphism $\Phi_3 \in W^{k+3,q}(M,M)$ such that
    \begin{align*}
        \Phi^*_3&(u_3^2 \, \tilde g_2) = \Phi^*(u^2 g) \in W^{k+3,q}(M)
    \end{align*}
    with $\Phi = \Phi_1 \circ \Phi_2 \circ \Phi_3 \in W^{k+1,q}(M,M)$ and $u = (u_3 \circ \Phi_2^{-1} \circ \Phi_1^{-1}) (u_2 \circ \Phi_1^{-1}) u_1 \in W^{k,q}(M)$.

    Finally, if $l = k$, we name $\tilde g_3 = \Phi_3^*(u^2_3g)$ and use that $\B_{\tilde g_3} = \Phi_3^*(\B_{g_3}) \in W^{k,q}(M)$ to run the argument one last time and establish
    \begin{align*}
        \Phi^*_4&(u_4^2 \, \tilde g_3) = \Phi^*(u^2 g) \in W^{k+4,q}(M)
    \end{align*}
    with $\Phi = \Phi_1 \circ \Phi_2 \circ \Phi_3 \circ \Phi_4 \in W^{k+1,q}(M,M)$ and $u = (u_4 \circ \Phi_3^{-1} \circ \Phi_2^{-1} \circ \Phi_1^{-1}) (u_3 \circ \Phi_2^{-1} \circ \Phi_1^{-1}) (u_2 \circ \Phi_1^{-1}) u_1 \in W^{k,q}(M)$
    for some positive function $u_4 \in W^{k+3,q}(M)$ and global diffeomorphism $\Phi_4 \in W^{k+4,q}(M,M)$.
\end{proof}  
The proofs of \cref{thm: main Cotton} and \cref{thm: main Weyl} follow along the same lines as the proof of \cref{thm: main Bach}, but using the equations \eqref{eq: Cotton} and \eqref{eq: Weyl} from \cref{lemma: conformal PDEs lowreg} instead. We will thereby omit them. 

At this point, we can establish \cref{cor: Yamabe with Cotton} as well:
\begin{proof}[Proof of \cref{cor: Yamabe with Cotton}]
    By \cref{thm: main Cotton} 
     there exists a positive function $u \in W^{2,q}(M)$ and a $W^{3,q}$ diffeomorphism $\Phi : M \to M$ such that $\Phi^*(u^2g) \in W^{3,q}(M)$. Since $W^{3,q}(M) \hookrightarrow W^{2,2q}(M)$ with $2q > 3$, the assumptions of the current theorem enable to apply \cite[Theorem A]{AvalosCogoRoyo1}. Hence, there exists 
     a conformal metric in $\llbracket\,\Phi^*(u^2g)\,\rrbracket_{W^{2,q}} = \llbracket\,g\,\rrbracket_{W^{2,q}}$ with constant scalar curvature.
\end{proof}

\subsection[Proof of the main theorem]{Proof of the main theorem}
\label{section: Proof of the main theorem}

Finally, we prove our main theorem concerning the regularity of critical points of the Weyl energy \eqref{eq: Weyl energy} in four dimensions:
\begin{proof}[Proof of \cref{thm: main}]
    Let us first show that the Weyl energy \eqref{eq: Weyl energy} in the space of $W^{2,q}(M)$ Riemannian metrics is $C^1$ with Frechet derivative $D\mathbf W(g) = \B_g$. Fix a Riemannian metric $g \in W^{2,q}(M)$ and consider a sequence of Riemannian metrics $(g_i)_i \subset C^\infty(M)$ such that $g_i \to g$ in $W^{2,q}(M)$. For any smooth symmetric tensor field $h$ with $\|h\|_{W^{2,q}(M)} = 1$ and $t > 0$, we estimate
    \begin{align*}
        \frac{\bigl|\mathbf W(g + th) - \mathbf W(g) - \langle\B_g, th\rangle\bigr|}{t} &\leq \frac{\bigl|\mathbf W(g + th) - \mathbf W(g_i + th)\bigr|}{t} + \frac{\bigl|\mathbf W(g) - \mathbf W(g_i)\bigr|}{t}
        \\
        &\hspace{-2cm} + \frac{\bigl|\langle \B_g - \B_{g_i}, th\rangle\bigr|}{t} + \frac{\bigl|\mathbf W(g_i + th) - \mathbf W(g_i) - \langle\B_{g_i}, th\rangle\bigr|}{t} \,.
    \end{align*}
    Let $k \geq 4$ be a large integer. Since the Weyl energy is a $C^1$ functional in the space of $C^4$ Riemannian metrics with $D\mathbf W(g_i) = \B_{g_i}$ as in \cite[§ 4.76]{BesseBook}, the last term goes to zero as $t \to 0$ uniformly in $i$. Thus, for $t > 0$ sufficiently small, there holds
    \begin{equation*}
        \frac{\bigl|\mathbf W(g_i + th) - \mathbf W(g_i) - \langle\B_{g_i}, th\rangle\bigr|}{t} < \frac{\varepsilon}{4} \,.
    \end{equation*}
    It follows from \cref{lemma: curvature continuity} that the maps
    \begin{align*}
        W^{2,q}(M) &\to \nR \,, \quad g \mapsto \mathbf W(g)
        \\
        W^{2,q}(M) &\to W^{-2,q}(M) \,, \quad g \mapsto \B_g
    \end{align*}
    are continuous and thus we can choose $i$ sufficiently large so that
    \begin{equation*}
        \bigl|\mathbf W(g + th) - \mathbf W(g_i + th)\bigr| + \bigl|\mathbf W(g) - \mathbf W(g_i)\bigr| + \bigl|\langle \B_g - \B_{g_i}, th\rangle\bigr| < \frac{\varepsilon}{4} t \,.
    \end{equation*}
    It follows that for any $\varepsilon > 0$, we may choose $t > 0$ sufficiently small so that
    \begin{equation*}
        \frac{\bigl|\mathbf W(g + th) - \mathbf W(g) - \langle\B_g, th\rangle\bigr|}{t} < \varepsilon \,.
    \end{equation*}
    It is immediate to see by a simple density argument that the same holds true for $h \in W^{2,q}(M)$, proving that $\mathbf W$ is Frechet differentiable at $g$ with $D\mathbf W(g) = \B_g$. Furthermore, it is $C^1$ due to the continuity of $g \mapsto \B_g$. In particular, $W^{2,q}(M)$ critical metrics of the Weyl functional satisfy $\B_g = 0$ in $W^{-2,q}(M)$.
    
    At this point, we can argue as in the proof of \cref{thm: main Bach}, with the difference that the bootstrap procedure does not stop, due to the fact that
    \begin{equation*}
        \B_{\Phi^*(u^2g)} = \Phi^*(u^2\B_g) = 0
    \end{equation*}
    is $C^\infty(M)$ regardless of the regularity of the diffeomorphism $\Phi$. We conclude that if $g \in W^{2,q}(M)$ is a critical point of the Weyl energy \eqref{eq: Weyl energy}, for any given $k \geq 3$ there exists a positive function $u_1 \in W^{2,q}(M)$ and diffeomorphism $\Phi_1 \in W^{3,q}(M,M)$ such that $\Phi_1^*(u_1^2g) \in C^k(M)$. In particular, for some fixed $k$ sufficiently large we can solve the Yamabe problem and find a positive function $u_2 \in C^k(M)$ such that $\tilde g = u_2^2 \Phi_1^*(u_1^2g)$ has constant scalar curvature. Note that $\B_{\tilde g} \equiv 0$ too. Then, we consider the elliptic system
    \begin{align*}
        \Delta_{\tilde g} \Ric_{\tilde g} &= \Rm_{\tilde g}*\Ric_{\tilde g} 
        \\
        \Delta_{\tilde g} \tilde g_{ij} &= -2\widetilde{\R}_{ij} + Q_{ij}( \tilde g, \partial \tilde g )
    \end{align*}
    in $\tilde g$-harmonic coordinates, where  $Q_{ij}$ is quadratic in $\partial \tilde g$. 
    We can bootstrap via \cref{thm: Laplace interior elliptic regularity} just as before to establish that $\tilde g_{ij} \in C^{\infty}_{loc}$ in $\tilde g$-harmonic coordinates. By \cref{prop: harmonic atlas}, the $\tilde g$-harmonic charts generate a $C^\infty$ differential structure on $M$, and \cref{thm: Whitney Sobolev} grants the existence of a global diffeomorphism $\Phi_2 \in C^{k+1}(M,M)$ such that $\Phi_2^*\tilde g \in C^\infty(M)$. We deduce that
    \begin{equation*}
        \Phi^*(u^2g) \in C^\infty(M)
    \end{equation*}
    with $\Phi = \Phi_1 \circ \Phi_2 \in W^{3, q}(M, M)$ and $u = (u_2 \circ \Phi_1^{-1}) u_1 \in W^{2, q}(M)$. 
\end{proof}
Finally, the proof of \cref{thm: main Weyl-flat} is similar to the second part of the proof of \cref{thm: main}, using the equation \eqref{eq: Weyl} from \cref{lemma: conformal PDEs lowreg} instead.

\vspace{0.5cm}

\printbibliography[]

\clearpage

\end{document}